\title{Absolute Continuity and Large-Scale Geometry of Polish Groups}
\author{Jake Herndon}
\newcommand{\R}{\mathbb{R}}
\newcommand{\Z}{\mathbb{Z}}
\newcommand{\loc}{\mathrm{loc}}
\DeclareMathOperator{\Bij}{Bij}
\DeclareMathOperator{\Homeo}{Homeo}
\DeclareMathOperator{\Diff}{Diff}
\DeclareMathOperator{\Isom}{Isom}
\DeclareMathOperator{\AC}{AC}
\theoremstyle{plain}
\newtheorem{thm}{Theorem}
\newtheorem{lemma}[thm]{Lemma}
\newtheorem{prop}[thm]{Proposition}
\newtheorem{coro}[thm]{Corollary}
\theoremstyle{definition}
\newtheorem{example}[thm]{Example}
\newtheorem{question}[thm]{Question}
\newtheorem{notation}[thm]{Notation}
\begin{document}

\begin{abstract}
We apply the theory of large-scale geometry of Polish groups to groups of absolutely continuous homeomorphisms. Let $M$ be either the compact interval or circle.  We prove that the Polish group $\AC_+(M)$ of orientation-preserving homeomorphisms $f:M\to M$ such that $f$ and $f^{-1}$ are absolutely continuous has a trivial quasi-isometry type.  We also prove that the Polish group $\AC_\Z^\loc(\R)$ of homeomorphisms $f:\R\to\R$ such that $f$ commutes with integer translations and both $f$ and $f^{-1}$ are locally absolutely continuous is quasi-isometric to the group of integers.  To study $\AC_+\left(\mathbb S^1\right)$ and $\AC_\Z^\loc(\R)$ we use the observation that these groups are Zappa-Sz\'ep products.
\end{abstract}

\maketitle

\section{Introduction}\label{sect: introduction}

Throughout this paper $I,\mathbb S^1,\R,$ and $\Z$ denote the compact interval, the circle, the real line, and the integers, respectively.  For $M=I$ and $M=\mathbb S^1$ we let $\AC_+(M)$ denote the group of orientation-preserving homeomorphisms $f:M\to M$ such that both $f$ and $f^{-1}$ are absolutely continuous, and we let $\AC_\Z^{\loc}(\R)$ denote the group of homeomorphisms $f:\R\to\R$ such that $f$ commutes with integer translations and both $f$ and $f^{-1}$ are locally absolutely continuous.  A {\em Polish group} is, as usual, a topological group whose underlying topology is Polish, i.e. separable and completely metrizable, and the underlying topology is called a {\em Polish group topology}.  In \cite{Solecki} Solecki defines a Polish group topology on $\AC_+(I)$, and here we define a Polish group topology on $\AC_+\left(\mathbb S^1\right)$ and another Polish group topology on $\AC_\Z^{\loc}(\R)$.  Henceforth we also use $\AC_+(I),\AC_+\left(\mathbb S^1\right)$, and $\AC_\Z^{\loc}(\R)$ to denote these Polish groups.

The goal of this paper is to study $\AC_+(I),\AC_+\left(\mathbb S^1\right),$ and $\AC_\Z^{\loc}(\R)$ from the perspective of large-scale geometry.  There is a general theory of large-scale geometry of Polish groups which is due to Rosendal and detailed in \cite{RoseCoarse}.  The main results of this paper are summarized by the following theorem.

\begin{thm}\label{thm: Main Theorem}
Each of the Polish groups $\AC_+(I),\AC_+\left(\mathbb S^1\right)$, and $\AC_\Z^{\loc}(\R)$ has a well-defined quasi-isometry type.  For both $\AC_+(I)$ and $\AC_+\left(\mathbb S^1\right)$ the quasi-isometry type is trivial and for $\AC_\Z^{\loc}(\R)$ the quasi-isometry type is that of $\Z$.
\end{thm}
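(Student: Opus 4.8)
The plan is to deduce everything from Rosendal's theory of large-scale geometry of Polish groups \cite{RoseCoarse}. The facts I will use are: a Polish group has a well-defined quasi-isometry type precisely when it is generated by a coarsely bounded set, and in that case the quasi-isometry type is represented by the word metric with respect to any coarsely bounded generating set; a Polish group has trivial quasi-isometry type precisely when it is itself coarsely bounded; a Polish group $G$ is coarsely bounded if and only if for every identity neighborhood $V$ there are a finite $F\subseteq G$ and an $n$ with $G=(FV)^n$; and the coarsely bounded subsets of a Polish group form an ideal closed under finite unions, inverses and products $AB=\{ab:a\in A,\ b\in B\}$, containing all compact subsets and, as subsets of the ambient group, all coarsely bounded subgroups. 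Granting these, the theorem splits into three statements: $\AC_+(I)$ is coarsely bounded; $\AC_+\left(\mathbb S^1\right)$ is coarsely bounded; and $\AC_\Z^{\loc}(\R)$ is generated by a coarsely bounded set and is quasi-isometric to $\Z$.

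The analytic heart, and the step I expect to be the main obstacle, is that $\AC_+(I)$ is coarsely bounded. I would verify Rosendal's criterion directly: given an identity neighborhood $V$ in Solecki's topology --- which we may take to be cut out by an $\varepsilon$-bound on the uniform distance together with an $\varepsilon$-bound on an $L^1$-type distance between the (log-)derivatives of $f$ and of $f^{-1}$ --- one shows $\AC_+(I)=(FV)^{n}$ for a suitable finite $F$ and an $n=n(\varepsilon)$ independent of $f$. The mechanism is a two-stage ``push toward the identity'': first drive the uniform displacement of $f$ below $\varepsilon$ in boundedly many steps, each step being left-multiplication by an absolutely continuous homeomorphism that is $\varepsilon$-close to the identity and has explicitly bounded slope; then, the uniform displacement being small, drive the $L^1$ (log-)derivative term below $\varepsilon$ in boundedly many further such steps. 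What makes the absolutely continuous case genuinely harder than the classical case of $\Homeo_+(I)$ is precisely the control of the derivative term under composition: by the chain rule $\log(f\circ g)'=\log f'\circ g+\log g'$, so bounding the $L^1$ norm of $\log(f\circ g)'$ forces one to control $(g^{-1})'$, which is why the argument must interleave the $C^0$ and $L^1$ estimates and why the auxiliary homeomorphisms are taken with prescribed (e.g. piecewise linear) slopes.

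Once $\AC_+(I)$ is known to be coarsely bounded, the circle case is formal. Fixing a basepoint of $\mathbb S^1=\R/\Z$, the group $\AC_+\left(\mathbb S^1\right)$ is the Zappa-Sz\'ep product of its subgroup $R$ of rotations and the stabilizer $B$ of the basepoint: $R\cap B=\{1\}$ and every $f$ factors uniquely as $f=r\cdot\left(r^{-1}f\right)$ with $r$ the rotation through $f(\text{basepoint})$, so $\AC_+\left(\mathbb S^1\right)=RB$ as a set. Cutting the circle at the basepoint identifies $B$ with $\AC_+(I)$, so $B$ is coarsely bounded; and $R$ is compact, hence coarsely bounded. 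Being a product of two coarsely bounded subsets, $\AC_+\left(\mathbb S^1\right)=RB$ is coarsely bounded, so its quasi-isometry type is trivial.

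For $\AC_\Z^{\loc}(\R)$ I would use the analogous Zappa-Sz\'ep decomposition: it is the product $AB$ of the subgroup $A=\{x\mapsto x+t:t\in\R\}$ of translations and the subgroup $B=\{f:f(0)=0\}$, the latter again identified with $\AC_+(I)$ (a point-fixing, integer-commuting, locally absolutely continuous homeomorphism of $\R$ is determined by its restriction to $[0,1]$). Let $S=\{x\mapsto x+t:|t|\le 1\}\cup B$. Then $S$ is coarsely bounded (a union of a compact set and a coarsely bounded subgroup), it generates $\AC_\Z^{\loc}(\R)$ (since $\{x\mapsto x+t:|t|\le 1\}$ generates $A$ and $A\cup B$ generates $AB$), and $S^2$ contains the open set $\{f:|f(0)|<1\}$, so $\AC_\Z^{\loc}(\R)$ is generated by a coarsely bounded set and locally bounded; hence it has a well-defined quasi-isometry type, computed by the word metric $\rho_S$. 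Finally, the evaluation map $\Phi\colon f\mapsto f(0)$ is a quasi-isometry from $\left(\AC_\Z^{\loc}(\R),\rho_S\right)$ onto $(\R,|\cdot|)$: it is coarsely Lipschitz because every element of $S\cup S^{-1}$ moves every point of $\R$ by less than $2$ (an element of $B$ or of $B^{-1}$ fixes every integer and hence displaces points by less than $1$), so $|h(0)|<2\,\rho_S(1,h)$; it is cobounded since it is onto; and it is a quasi-isometric embedding because any $h$ with $|h(0)|\le N$ equals $(x\mapsto x+h(0))\cdot b$ with $b\in B$, so $\rho_S(1,h)\le N+2$. Thus $\rho_S$ and $|\Phi(\cdot)-\Phi(\cdot)|$ agree up to multiplicative and additive constants, and since $\R$ is quasi-isometric to $\Z$ this gives that $\AC_\Z^{\loc}(\R)$ is quasi-isometric to $\Z$. (Equivalently, one may note that $1\to\Z\to\AC_\Z^{\loc}(\R)\to\AC_+\left(\mathbb S^1\right)\to 1$ is a central extension of a coarsely bounded group by $\Z$ and quote the corresponding general principle.)
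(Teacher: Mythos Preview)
Your overall architecture---reduce the circle and line cases to $\AC_+(I)$ via Zappa--Sz\'ep decompositions with a compact (or coarsely bounded) translation factor and a basepoint stabilizer isomorphic to $\AC_+(I)$---matches the paper exactly, and your evaluation-map argument for $\AC_\Z^{\loc}(\R)$ is essentially the inverse of the paper's $r\mapsto\tau_r$ quasi-isometry.

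The substantive divergence is in the core step, showing $\AC_+(I)$ is coarsely bounded. Your plan is compositional: factor an arbitrary $f$ as a bounded-length product of homeomorphisms each close to the identity in both $C^0$ and the $L^1$ derivative metric, interleaving the two estimates to handle the chain-rule term. You correctly identify this as the hard part, and you do not actually carry it out. The paper sidesteps the entire difficulty with a one-line \emph{linear} homotopy rather than a compositional one: set $\mathcal F(f,r)(x)=(1-r)f(x)+rx$. Then
\[
d_*\bigl(\mathcal F(f,r),\mathcal F(f,s)\bigr)=|r-s|\int_0^1|f'(t)-1|\,dt\le 2|r-s|
\]
uniformly in $f$, since $f'\ge 0$ and $\int_0^1 f'=1$. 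Right-invariance of $d_*$ then gives $G=U^n$ for every identity neighborhood $U$ and some $n=n(U)$. Because the homotopy is a convex combination rather than a composition, no chain rule ever appears, and there is nothing to interleave.

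Two smaller remarks. First, the metric the paper works with is $d_*(f,g)=\int_0^1|f'-g'|$, not an $L^1$ distance on log-derivatives of $f$ and $f^{-1}$; this is why the uniform bound $\int|f'-1|\le 2$ is immediate. Second, your compositional strategy is not obviously wrong, but as written it is only a plan: the step ``drive the $L^1$ derivative term below $\varepsilon$ in boundedly many steps'' is exactly where the difficulty lives, and you have not supplied the estimate. The paper's convex-combination trick is both shorter and complete.
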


The theory of large-scale geometry of Polish groups generalizes the classical theory of large-scale geometry of finitely generated groups by viewing every finitely generated group as a discrete topological group, and therefore, it is helpful to first review the situation for finitely generated groups before making Theorem \ref{thm: Main Theorem} precise.  Recall, for a group $G$ generated by a subset $S$ there is the associated left-invariant {\em word metric} $\rho_S$ defined by
\[\rho_S(x,y)=\min\left\{k\geq 0\vert x^{-1}y\in\left(S\cup S^{-1}\right)^k\right\}\]
for all $x,y\in G$.  If $G$ is finitely generated then the word metrics obtained from finite generating sets are all mutually quasi-isometric, and so we say the quasi-isometry type of $G$ is the quasi-isometry equivalence class of the metric space $(G,\rho_S)$ for any finite generating set $S$.  A running theme in the subject of geometric group theory is the relation of algebraic properties of a finitely generated group with large-scale metric properties of its quasi-isometry type.

A crucial part of the general theory for Polish groups is to identity a collection of distinguished generating sets that, when present, give mutually quasi-isometric word metrics.  Following \cite{RoseCoarse}, a subset $S$ of a Polish group $G$ is {\em coarsely bounded in $G$} if $S$ is bounded in every metric on $G$ which is topologically compatible and left-invariant.  For a Polish group $G$ which is generated by a coarsely bounded subset the {\em quasi-isometry type of $G$} is the quasi-isometry equivalence class of the metric space $(G,\rho_S)$ for any coarsely bounded and closed generating set $S$.  Indeed, by the Baire Category Theorem, any word metrics obtained from generating sets which are coarsely bounded and closed are mutually quasi-isometric.  We note that the collection of all coarsely bounded subsets of a Polish group is closed under taking topological closures, and so any Polish group which is generated by a coarsely bounded subset is also generated by one which is closed.  We also note that the property of a subset being coarsely bounded is not a homeomorphism invariant as the definition quantifies over metrics on the ambient group.  However, if it does not create ambiguity we omit reference to the ambient group.  A Polish group which is a coarsely bounded subset of itself is a {\em coarsely bounded group}.  For Polish groups $G$ and $H$, a function $G\to H$ is a {\em quasi-isometry of Polish groups} if $G$ and $H$ are generated by subsets which are coarsely bounded and $(G,d_G)\to(H,d_H)$ is a quasi-isometry whenever $d_G$ and $d_H$ represent the quasi-isometry types of $G$ and $H$, respectively.  In this case $G$ and $H$ are {\em quasi-isometric Polish groups}.  The coarsely bounded Polish groups are those which are quasi-isometric to the trivial group.

In the context of the above general theory we prove Theorem \ref{thm: Main Theorem} by showing $\AC_+(I)$ and $\AC_+\left(\mathbb S^1\right)$ are coarsely bounded groups and by showing the identification of $\Z$ with the subgroup of $\AC_\Z^\loc(\R)$ consisting of integer translations defines a quasi-isometry of Polish groups $\Z\to\AC_\Z^\loc(\R)$.  Our exposition repeatedly makes use of the observation that for both the case of $G=\AC_+\left(\mathbb S^1\right)$ and $G=\AC_\Z^{\loc}(\R)$ the group $G$ is a Zappa-Sz\'ep product of subgroups $H$ and $K$ with $K$ isomorphic to $\AC_+(I)$.  For this reason Section \ref{sect: ZSP} is a standalone section which contains information relevant to Zappa-Sz\'ep products.  In Section \ref{sect: prelims} we recall the definition of absolute continuity and its local counterpart and we give a compatible, right-invariant metric on each of the Polish groups $\AC_+(I)$, $\AC_+\left(\mathbb S^1\right)$, and $\AC_\Z^\loc(\R)$.  In Section \ref{sect: absolute continuity} we prove Solecki's topology on $\AC_+(I)$ makes it a coarsely bounded group, and in Section \ref{sect: AC(S) and AC_Z(R)} we verify that the topologies defined on $\AC_+\left(\mathbb S^1\right)$ and $\AC_\Z^\loc(\R)$ are, in fact, Polish group topologies and then we complete the proof of Theorem \ref{thm: Main Theorem}.

Let us mention some other groups of homeomorphisms where the large-scale perspective is relevant.  For a compact manifold $M$ the group $\Homeo(M)$ of homeomorphisms of $M$ is a Polish group with the compact-open topology.  Let $M$ be a compact manifold and let $\Homeo_0(M)$ denote the identity component in $\Homeo(M)$.  In \cite{RoseCoarse} Mann and Rosendal study the large-scale geometry of $\Homeo_0(M)$.  The main results there include proving $\Homeo_0(M)$ is generated by a coarsely bounded subset and so has a well-defined quasi-isometry type.  They show if $M$ is the $n$--sphere then $\Homeo_0(M)$ is a coarsely bounded group and if $\dim (M)>1$ and $\pi_1(M)$ is infinite then the quasi-isometry type of $\Homeo_0(M)$ is non-trivial.  Let $k>0$ be an integer or possibly $\infty$ and let $M=I$ or $M=\mathbb S^1$.  The group $\Diff_+^k(M)$ of orientation-preserving $C^k$--diffeomorphisms of $M$ is a Polish group with its $C^k$--topology. In \cite{Cohen} Cohen proves $\Diff_+^k(M)$ is generated by a coarsely bounded subset if and only if $k<\infty$, and in this case the quasi-isometry type of $\Diff_+^k(M)$ is non-trivial.

\section{Preliminaries}\label{sect: prelims}

We identify $I$ with the unit interval $[0,1]$ and $\mathbb S^1$ with the group of complex numbers with unit norm, and we let $\pi:\R\to\mathbb S^1$ be the covering map $x\mapsto e^{2\pi ix}$.  We use $d_{\mathbb S^1}$ to denote the metric on $\mathbb S^1$ which is defined by taking the minimum distance between $\pi^{-1}(x)$ and $\pi^{-1}(y)$ in $\R$ for all $x,y\in\mathbb S^1$

A homeomorphism $f:\R\to\R$ {\em commutes with integer translations} if
\[f(x+n)=f(x)+n\]
for all $x\in\R$ and all $n\in\Z$.  The group of all homeomorphisms of $\R$ that commute with integer translations is denoted $\Homeo_\Z(\R)$.  For $G=\Homeo_+(I)$, $G=\Homeo_+\left(\mathbb S^1\right)$, and $G=\Homeo_\Z(\R)$ we use $d_\infty$ to denote a right-invariant metric on $G$ which induces the compact-open topology.  Hence $G$ is a Polish group with the topology induced by $d_\infty$.  For concreteness we set
\[d_\infty(f,g)=\sup_{x\in I}|f(x)-g(x)|\]
for all $f,g\in\Homeo_+(I)$ and for all $f,g\in\Homeo_\Z(\R)$, and we set
\[d_\infty(f,g)=\sup_{x\in\mathbb S^1}d_{\mathbb S^1}(f(x),g(x))\]
for all $f,g\in\Homeo_+\left(\mathbb S^1\right)$.

A function $f:J\to\R$ whose domain is an interval is {\em absolutely continuous} if for every $\epsilon>0$ there is a $\delta>0$ such that for every finite sequence $(a_1,b_1),\ldots,(a_n,b_n)$ of disjoint subintervals of $J$, if $\sum_{i=1}^n(b_i-a_i)<\delta$ then $\sum_{i=1}^n|f(b_i)-f(a_i)|<\epsilon$.  A function $f:J\to\R$ whose domain as an interval is {\em locally absolutely continuous} if the restriction of $f$ to every compact subinterval of $J$ is absolutely continuous.  For every homeomorphism $f:\mathbb S^1\to\mathbb S^1$ there is a unique homeomorphism $\tilde f:\R\to\R$ with $\tilde f(0)\in[0,1)$ and $\pi\circ\tilde f=f\circ\pi$.  A homeomorphism $f:\mathbb S^1\to\mathbb S^1$ is {\em absolutely continuous} if $\tilde f$ is locally absolutely continuous.  

Suppose $J$ is a compact interval and $f:J\to\R$ is continuous and nondecreasing.  The Fundamental Theorem of Lebesgue Integration states that $f$ is absolutely continuous if and only if $f$ has a derivative $f'$ almost everywhere on $J$ with respect to Lebesgue measure, $f'\in L^1(J)$, and for all $x\in J$
\[f(x)=f(a)+\int_a^x f'(t)\ dt.\]
See, for instance, \cite[Theorem 7.18]{Rudin}.

We set
\[d_*(f,g)=\int_0^1\left|f'(t)-g'(t)\right|\ dt\]
for all $f,g\in\AC_+(I)$. In the proof of \cite[Lemma 2.4]{Solecki} Solecki shows $d_*$ defines a right-invariant metric on $\AC_+(I)$ which induces a Polish group topology.

\begin{lemma}\label{lemma: inclusion is continuous}
On $\AC_+(I)$ the metrics $d_\infty$ and $d_*$ satisfy $d_\infty\leq d_*$.  Consequently, inclusion $\AC_+(I)\to\Homeo_+(I)$ is continuous.
\end{lemma}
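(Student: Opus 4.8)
The plan is to deduce the inequality directly from the Fundamental Theorem of Lebesgue Integration, using the fact that every member of $\AC_+(I)$ fixes the endpoints of $I=[0,1]$. First I would fix $f,g\in\AC_+(I)$; since both are orientation-preserving homeomorphisms of $[0,1]$ we have $f(0)=g(0)=0$, and since both are absolutely continuous and nondecreasing the Fundamental Theorem of Lebesgue Integration quoted above gives $f(x)=\int_0^x f'(t)\ dt$ and $g(x)=\int_0^x g'(t)\ dt$ for every $x\in I$. Subtracting these identities and using the triangle inequality for integrals yields
\[
|f(x)-g(x)|=\left|\int_0^x\bigl(f'(t)-g'(t)\bigr)\ dt\right|\leq\int_0^x\bigl|f'(t)-g'(t)\bigr|\ dt\leq\int_0^1\bigl|f'(t)-g'(t)\bigr|\ dt=d_*(f,g)
\]
for every $x\in I$, and taking the supremum over $x\in I$ gives $d_\infty(f,g)\leq d_*(f,g)$.

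For the consequence, note that $d_\infty\leq d_*$ says exactly that the identity map $(\AC_+(I),d_*)\to(\AC_+(I),d_\infty)$ is $1$-Lipschitz, hence continuous. Because the metric $d_\infty$ on $\AC_+(I)$ is by definition the restriction to $\AC_+(I)$ of the metric $d_\infty$ on $\Homeo_+(I)$, the inclusion $(\AC_+(I),d_\infty)\to(\Homeo_+(I),d_\infty)$ is an isometric embedding and in particular continuous; composing the two maps shows the inclusion $\AC_+(I)\to\Homeo_+(I)$ is continuous.

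I do not anticipate a real obstacle here. The only points requiring a moment's care are that the boundary contributions in the Fundamental Theorem of Calculus must cancel --- which is precisely where $f(0)=g(0)$ enters --- and that the $d_\infty$ appearing on $\AC_+(I)$ is literally the same supremum metric as on $\Homeo_+(I)$, so that the inclusion is an isometric embedding rather than merely a topological one.
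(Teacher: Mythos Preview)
Your proof is correct and follows essentially the same approach as the paper: both use the Fundamental Theorem of Lebesgue Integration together with $f(0)=g(0)=0$ to write $f(x)-g(x)$ as $\int_0^x(f'-g')\,dt$ and then bound by $d_*(f,g)$, concluding that the inclusion is a contraction (equivalently, $1$-Lipschitz). Your version is slightly more explicit about the endpoint cancellation and the factoring of the inclusion through an isometric embedding, but there is no substantive difference.
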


\begin{proof}
Let $f,g\in\AC_+(I)$ and let $x\in I$.  Then
\begin{align*}
f(x)-g(x)=\int_0^x f'(t)-g'(t)\ dt\leq\int_0^x|f'(t)-g'(t)|\ dt\leq d_*(f,g)
\end{align*}
and likewise $g(x)-f(x)\leq d_*(f,g)$.  So $d_\infty\leq d_*$ on $\AC_+(I)$.  This says inclusion $(\AC_+(I),d_*)\to(\Homeo_+(I),d_\infty)$ is a contraction mapping and so it is continuous.
\end{proof}

We set
\[d_*(f,g)=\int_0^1\left|f'(t)-g'(t)\right|\ dt\]
for all $f,g\in\AC_\Z^\loc(I)$.  In other words, on $\AC_\Z^\loc(\R)$ the quantity $d_*$ is given by the same formula which defines the metric $d_*$ on $\AC_+(I)$.  It is straightforward to check that $d_*$ defines a pseudometric on $\AC_\Z^\loc(\R)$.  For $f,g\in\AC_\Z^\loc(\R)$ we have $d_*(f,g)=0$ if and only if $f-g$ is a constant function, so $d_*$ does not define a metric on $\AC_\Z^\loc(\R)$.

For each $r\in\R$ we let $\tau_r:\R\to\R$ be translation $x\mapsto x+r$.

\begin{prop}\label{prop: d_*}
On $\AC_\Z^\loc(\R)$ the pseudometric $d_*$ is right-invariant and satisfies
\[d_*(\tau_r\circ f,\tau_s\circ g)=d_*(f,g)\]
for all $r,s\in\R$ and all $f,g\in\AC_\Z^\loc(\R)$.
\end{prop}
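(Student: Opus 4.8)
The plan is to dispatch the two assertions in turn, starting with the (easy) invariance under post-composition with translations. Since $(\tau_r\circ f)(t)=f(t)+r$, the function $\tau_r\circ f$ is differentiable at exactly the points where $f$ is, and there $(\tau_r\circ f)'=f'$; in particular $\tau_r\circ f=f+r$ again lies in $\AC_\Z^\loc(\R)$, so the left-hand side of the displayed identity is meaningful. Consequently $(\tau_r\circ f)'-(\tau_s\circ g)'=f'-g'$ almost everywhere on $[0,1]$, and integrating $|f'-g'|$ yields $d_*(\tau_r\circ f,\tau_s\circ g)=d_*(f,g)$ at once.

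For right-invariance I must show $d_*(f\circ h,g\circ h)=d_*(f,g)$ for all $f,g,h\in\AC_\Z^\loc(\R)$, and I would argue in three steps. First, the chain rule: since $h$ and $h^{-1}$ are locally absolutely continuous increasing homeomorphisms, $h^{-1}$ has the Luzin N property, so the preimage under $h$ of the null set on which $f'$ (resp.\ $g'$) fails to exist is again null; combined with almost-everywhere differentiability of $h$, this gives $(f\circ h)'(t)=f'(h(t))\,h'(t)$ and $(g\circ h)'(t)=g'(h(t))\,h'(t)$ for a.e.\ $t\in[0,1]$, and since $h$ is increasing $h'\ge 0$ a.e., so
\[
d_*(f\circ h,g\circ h)=\int_0^1\bigl|f'(h(t))-g'(h(t))\bigr|\,h'(t)\ dt.
\]
Second, the change-of-variables formula for absolutely continuous functions applied to the substitution $u=h(t)$, with the nonnegative integrand $|f'-g'|$, turns the right-hand side into $\int_{h(0)}^{h(1)}|f'(u)-g'(u)|\ du$; because $h$ commutes with integer translations, $h(1)=h(0)+1$, so this is an integral over an interval of length $1$. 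Third, $u\mapsto|f'(u)-g'(u)|$ is $1$-periodic: from $f(u+1)=f(u)+1$ one checks directly that if $f'(u)$ exists then so does $f'(u+1)$ and they are equal, and likewise for $g$, so the integral over $[h(0),h(0)+1]$ equals the integral over $[0,1]$, which is $d_*(f,g)$.

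The verifications that $d_*$ is a pseudometric and that the group is closed under composition are assumed from the preceding discussion, so the only genuinely delicate point is the chain-rule/change-of-variables step: the composition of two absolutely continuous functions need not be absolutely continuous, but here the inner map $h$ is a monotone homeomorphism whose inverse is also absolutely continuous, which is precisely what is needed to transport null sets correctly and to justify both the chain rule and the substitution formula. I would either cite a standard reference for the absolutely-continuous change-of-variables theorem, or, for a self-contained treatment, deduce the required identity from the Fundamental Theorem of Lebesgue Integration quoted above together with the fact that an increasing absolutely continuous homeomorphism sends null sets to null sets.
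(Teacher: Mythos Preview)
Your proposal is correct and follows essentially the same route as the paper: periodicity of $f'$, the change-of-variables formula yielding $\int_{h(0)}^{h(1)}|f'-g'|$, the observation $h(1)=h(0)+1$, and $(\tau_r\circ f)'=f'$. The paper simply writes ``by integration by substitution'' where you spell out the chain rule and Luzin~N justification, but the argument is the same.
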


\begin{proof}
For each $f\in\AC_\Z^\loc(\R)$ we have
\[f'=(\tau_1\circ f)'=(f\circ\tau_1)'=f'\circ\tau_1\]
so $f'$ is periodic with period $1$.  From this it follows that the integrand $|f'(t)-g'(t)|$ that appears in the definition of $d_*$ is periodic in $t$ with period $1$ and so
\[d_*(f,g)=\int_a^b|f'(t)-g'(t)|\ dt\]
for all $f,g\in\AC_\Z^\loc(\R)$ and all $a,b\in\R$ with $b-a=1$.  Now for any $f,g,u\in\AC_\Z^\loc(\R)$
\begin{align*}
d_*(f\circ u,g\circ u)&=\int_0^1\left|(f\circ u)'(t)-(g\circ u)'(t)\right|\ dt\\
&=\int_{u(0)}^{u(1)}\left|f'(t)-g'(t)\right| dt\\
&=d_*(f,g)
\end{align*}
by integration by substitution and because $u(1)-u(0)=1$, and so $d_*$ is right-invariant.  For all $r\in\R$ and $f\in\AC_\Z^\loc(\R)$ we have $(\tau_r\circ f)'=f'$ which implies the equality in the proposition.
\end{proof}

We seek a right-invariant metric on $\AC_\Z^\loc(\R)$ which induces a Polish group topology and there is an obvious candidate for such a metric.  The sum of a right-invariant metric and right-invariant pseudometric is always a right-invariant metric, so $d_\infty+d_*$ defines a right-invariant metric on $\AC_\Z^\loc(\R)$.  We also set
\[d_*(f,g)=\int_0^1\left|\tilde f'(t)-\tilde g'(t)\right|\ dt\]
for all $f,g\in\AC_+\left(\mathbb S^1\right)$.  Following similar reasoning, $d_*$ defines a right-invariant pseudometric and $d_\infty+d_*$ defines a right-invariant metric on $\AC_+\left(\mathbb S^1\right)$.

We prove for both $G=\AC_+\left(\mathbb S^1\right)$ and $G=\AC_\Z^\loc\left(\R\right)$ the metric $d_\infty+d_*$ is compatible with a Polish group topology on $G$.  See Propositions \ref{prop: line group topology} and \ref{prop: circle group topology}.  We note here that Lemma \ref{lemma: inclusion is continuous} implies the metrics $d_*$ and $d_\infty+d_*$ induce the same topology on $\AC_+(I)$.

\section{$\AC_+(I)$ is a Coarsely Bounded Group}\label{sect: absolute continuity}

To prove that $\AC_+(I)$ is a coarsely bounded group we find a suitably uniform deformation retract of $\AC_+(I)$ onto its trivial subgroup.  Lemma \ref{lem:  homotopy implies coarsely bounded} serves to isolate the uniformity condition.

\begin{lemma}\label{lem:  homotopy implies coarsely bounded}
Suppose $G$ is a Polish group and $\mathcal F:G\times I\to G$ is a deformation retract of $G$ onto its trivial subgroup.  Also suppose for every identity neighborhood $U\subset G$ there exists $\epsilon>0$ such that for all $g\in G$ and all $r,s\in I$, if $|r-s|\leq \epsilon$ then $\mathcal F(g,r)\mathcal F(g,s)^{-1}\in U$.  Then $G$ is a coarsely bounded group.
\end{lemma}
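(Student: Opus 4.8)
The plan is to use Rosendal's criterion that a Polish group $G$ is coarsely bounded precisely when for every identity neighborhood $U$ there is a finite set $F\subset G$ and $n\in\mathbb N$ with $G=(FU)^n$; in fact, since we have a deformation retract contracting all of $G$ to the identity, we will be able to take $F$ to be a single point (the identity) and hence show $G=U^n$ for some $n$ depending only on $U$. So fix an identity neighborhood $U$; without loss of generality assume $U$ is symmetric. Let $\epsilon>0$ be as supplied by the hypothesis, and choose an integer $n$ with $n\geq 1/\epsilon$.

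First I would record the basic properties of $\mathcal F$: writing $\mathcal F_r(g)=\mathcal F(g,r)$, the fact that $\mathcal F$ is a deformation retract of $G$ onto its trivial subgroup means (under the usual convention) that $\mathcal F_0=\mathrm{id}_G$ and $\mathcal F_1\equiv e$, with $r\mapsto\mathcal F_r(g)$ continuous for each $g$. Then for an arbitrary $g\in G$, partition $I$ by the points $r_k=k/n$ for $k=0,1,\ldots,n$, so that consecutive points satisfy $|r_k-r_{k-1}|=1/n\leq\epsilon$. Telescoping,
\[
g=\mathcal F_0(g)=\bigl(\mathcal F_{r_0}(g)\mathcal F_{r_1}(g)^{-1}\bigr)\bigl(\mathcal F_{r_1}(g)\mathcal F_{r_2}(g)^{-1}\bigr)\cdots\bigl(\mathcal F_{r_{n-1}}(g)\mathcal F_{r_n}(g)^{-1}\bigr),
\]
where we have used $\mathcal F_{r_n}(g)=\mathcal F_1(g)=e$. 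By the uniformity hypothesis each factor $\mathcal F_{r_{k-1}}(g)\mathcal F_{r_k}(g)^{-1}$ lies in $U$, so $g\in U^n$. Since $g$ was arbitrary, $G=U^n$, and as this $n$ was chosen independently of $g$, the group $G$ is coarsely bounded by Rosendal's criterion.

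The only real subtlety — and the step I would be most careful about — is making sure the coarse boundedness criterion being invoked is exactly the right one and is applicable: one wants the characterization (see \cite{RoseCoarse}) that $A\subset G$ is coarsely bounded iff for every identity neighborhood $U$ there are a finite $F\subset G$ and $n$ with $A\subset(FU)^n$, and then to note that $A=G$ with $F=\{e\}$ gives $G=U^n$. There is also a small bookkeeping point about which endpoint of the homotopy is the identity and which is $\mathrm{id}_G$; the argument is symmetric in this choice, so one simply fixes a convention at the outset. Everything else is the elementary telescoping identity above together with a single application of the hypothesis, so no further estimates are needed.
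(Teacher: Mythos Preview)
Your argument is correct and matches the paper's proof almost verbatim: both fix $U$, take $\epsilon$ from the hypothesis, choose $n\geq 1/\epsilon$, and telescope
\[
g=\mathcal F(g,0)\mathcal F(g,1)^{-1}=\prod_{k=1}^n \mathcal F(g,r_{k-1})\mathcal F(g,r_k)^{-1}\in U^n.
\]
The only difference is in the last step. You invoke Rosendal's characterization of coarse boundedness (for every $U$ there exist finite $F$ and $n$ with $G=(FU)^n$), whereas the paper instead verifies the definition directly: given a compatible left-invariant metric $d$, apply the above with $U$ the open unit $d$--ball to get $G=U^n$, and then left-invariance plus the triangle inequality bound the $d$--diameter of $G$ by $n$. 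Your route is slightly quicker if one is willing to import the criterion from \cite{RoseCoarse}; the paper's is self-contained. Note also that neither argument uses continuity of $\mathcal F$ (you record it but do not invoke it), and the symmetry assumption on $U$ is harmless but unnecessary.
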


\begin{proof}
Let $U\subset G$ be an identity neighborhood, let $\epsilon>0$ be given by the condition in the lemma, and let $n>0$ be a positive integer with $n\geq \epsilon^{-1}$.  For every $g\in G$ we have
\begin{align*}
g=\mathcal F(g,0)\mathcal F(g,1)^{-1}=\prod_{i=1}^n\left[\mathcal F\left(g,\frac{i-1}{n}\right)\mathcal F\left(g,\frac{i}{n}\right)^{-1}\right]\in U^n
\end{align*}
so $G\subset U^n$, and so the condition implies the following:  For every identity neighborhood $U$ there is an integer $n>0$ such that $G= U^n$.

Let $d$ be a compatible, left-invariant metric on $G$.  We must show $G$ is bounded in $d$.  So let $B$ denote the open unit $d$--ball centered at the identity in $G$.  By the condition at the end of the previous paragraph there is an integer $n>0$ such that $G=B^n$.  By left invariance and the triangle inequality this implies the $d$--diameter of $G$ is no greater than $n$.
\end{proof}

\begin{prop}\label{prop: AC is coarsely bounded}
$\AC_+(I)$ is a coarsely bounded group.
\end{prop}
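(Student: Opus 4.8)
The plan is to apply Lemma~\ref{lem:  homotopy implies coarsely bounded} to the straight-line homotopy that contracts $\AC_+(I)$ toward the identity. For $g\in\AC_+(I)$ and $s\in I$ I would define $\mathcal F(g,s)\colon I\to I$ by
\[\mathcal F(g,s)(x)=(1-s)\,g(x)+s\,x,\]
so that $\mathcal F(g,0)=g$, $\mathcal F(g,1)=\mathrm{id}$, and $\mathcal F(\mathrm{id},s)=\mathrm{id}$ for every $s\in I$.

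The first thing to verify is that $\mathcal F$ really takes values in $\AC_+(I)$. Each $\mathcal F(g,s)$ is a convex combination of two strictly increasing absolutely continuous functions fixing $0$ and $1$, hence is itself a strictly increasing absolutely continuous bijection of $I$ onto itself, i.e.\ an orientation-preserving homeomorphism of $I$ which is absolutely continuous. Its derivative satisfies $\mathcal F(g,s)'=(1-s)g'+s\ge s$ almost everywhere, so for $s>0$ the inverse $\mathcal F(g,s)^{-1}$ is $\tfrac1s$--Lipschitz and hence absolutely continuous, while for $s=0$ one simply has $\mathcal F(g,0)=g\in\AC_+(I)$. Thus $\mathcal F(g,s)\in\AC_+(I)$ in all cases.

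Differentiating the convex combinations and using the triangle inequality in $L^1(I)$ gives, for all $f,g\in\AC_+(I)$ and $r,s\in I$,
\[d_*\bigl(\mathcal F(f,r),\mathcal F(g,s)\bigr)\le(1-r)\,d_*(f,g)+|r-s|\,d_*(g,\mathrm{id}),\]
so $\mathcal F$ is continuous (indeed Lipschitz) and is therefore a deformation retract of $\AC_+(I)$ onto its trivial subgroup; taking $f=g$ gives $d_*\bigl(\mathcal F(g,r),\mathcal F(g,s)\bigr)=|r-s|\,d_*(g,\mathrm{id})$. The key point is that $d_*$ is a \emph{bounded} metric on $\AC_+(I)$: since $f',g'\ge0$ with $\int_0^1 f'(t)\ dt=\int_0^1 g'(t)\ dt=1$, the triangle inequality in $L^1$ forces $d_*(f,g)\le2$. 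Hence $d_*\bigl(\mathcal F(g,r),\mathcal F(g,s)\bigr)\le2|r-s|$ uniformly in $g$. Given an identity neighborhood $U$, I would choose $\delta>0$ with the open $d_*$--ball of radius $\delta$ about $\mathrm{id}$ contained in $U$ and set $\epsilon=\delta/3$; by right-invariance of $d_*$, if $|r-s|\le\epsilon$ then
\[d_*\bigl(\mathcal F(g,r)\mathcal F(g,s)^{-1},\mathrm{id}\bigr)=d_*\bigl(\mathcal F(g,r),\mathcal F(g,s)\bigr)\le2\epsilon<\delta,\]
so $\mathcal F(g,r)\mathcal F(g,s)^{-1}\in U$. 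This is exactly the hypothesis of Lemma~\ref{lem:  homotopy implies coarsely bounded}, so $\AC_+(I)$ is a coarsely bounded group.

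The argument rests on two small observations: that the straight-line homotopy never leaves $\AC_+(I)$ --- the one place where absolute continuity of the inverse has to be argued, and the only step that is more than a routine computation --- and that $d_*$ is a bounded metric on $\AC_+(I)$, which is what makes the naive homotopy move slowly enough, uniformly over all $g$, for Lemma~\ref{lem:  homotopy implies coarsely bounded} to apply. I do not foresee a genuine obstacle beyond treating these two points carefully.
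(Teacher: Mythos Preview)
Your proof is correct and follows essentially the same approach as the paper: the same straight-line homotopy $\mathcal F(g,s)(x)=(1-s)g(x)+sx$, the same uniform estimate $d_*(\mathcal F(g,r),\mathcal F(g,s))\le 2|r-s|$ coming from $\int_0^1|g'-1|\le 2$, and the same appeal to right-invariance of $d_*$ before invoking Lemma~\ref{lem:  homotopy implies coarsely bounded}. You are more careful than the paper in checking that $\mathcal F(g,s)\in\AC_+(I)$ (the paper simply asserts this) and you verify continuity of $\mathcal F$, which the paper explicitly notes is unnecessary since the proof of the lemma does not use it.
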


\begin{proof}
For every $f\in\AC_+(I)$ and $r\in I$ let $\mathcal F(f,r)$ be the element of $\AC_+(I)$ defined by
\[\mathcal F(f,r)(x)=(1-r)f(x)+rx\]
for all $x\in I$.  Every $f\in\AC_+(I)$ is increasing so $f'\geq 0$.  It follows
\begin{align*}
d_*\left(\mathcal F(f,r),\mathcal F(f,s)\right)&=|r-s|\ \int_0^1\left|f'(t)-1\right| dt\\
&\leq|r-s|\ \left(\int_0^1|f'(t)|\ dt+\int_0^1 1\ dt\right)\\
&=|r-s|\left(\int_0^1f'(t)\ dt+1\right)\\
&=|r-s|\ 2
\end{align*}
for all $f\in \AC_+(I)$ and $r,s\in I$.

Now apply Lemma \ref{lem:  homotopy implies coarsely bounded} with $G=\AC_+(I)$.  It is true that $\mathcal F$ is a deformation retract of $G$ onto its trivial subgroup but we omit verifying this.  Note that the proof of Lemma \ref{lem:  homotopy implies coarsely bounded} does not involve continuity of $\mathcal F$ and so it is enough to check that the uniformity condition on $\mathcal F$ is satisfied.  Let $U$ be any identity neighborhood in $G$, let $B$ be the open ball of radius $\delta$ about the identity in $G$ with $\delta$ chosen sufficiently small so that $B\subset U$, and take $\epsilon=\delta/2$.  From the above inequality it follows that for any $r,s\in I$, if $|r-s|\leq \epsilon$ then $d_*(\mathcal F(f,r),\mathcal F(f,s))\leq \delta$, and so by right invariance of $d_*$ we have $\mathcal F(f,r)\mathcal F(f,s)^{-1}\in U$.  So Lemma \ref{lem:  homotopy implies coarsely bounded} applies and $\AC_+(I)$ is a coarsely bounded group.
\end{proof}

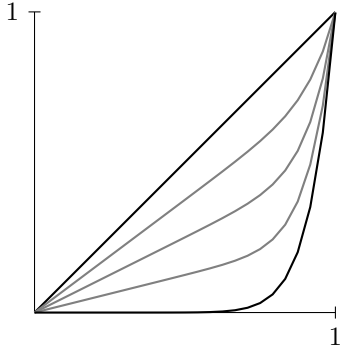
\begin{figure}[ht]
\centering
\begin{tikzpicture}[xscale=4.0,yscale=4.0]
\draw  (0,1.0) -- (0,0) -- (1.0,0);
\draw (-0.02,1.0) --(0.02,1.0);
\node [left] at (-0.02,1) {1};
\draw (1.0,0.02) --(1.0,-0.02);
\node [below] at (1,-0.02) {1};

\draw[black, thick, domain=0:1] plot (\x, {\x});
\draw[gray, thick, domain=0:1] plot (\x, {(1/4)*\x^12+(3/4)*\x});
\draw[gray, thick, domain=0:1] plot (\x, {(1/2)*\x^12+(1/2)*\x});
\draw[gray, thick, domain=0:1] plot (\x, {(3/4)*\x^12+(1/4)*\x});
\draw[black, thick, domain=0:1] plot (\x, {\x^12});
\end{tikzpicture}
\caption{The deformation retract $\mathcal F$ of $\AC_+(I)$ onto its trivial subgroup that appears in the proof of Proposition \ref{prop: AC is coarsely bounded} deforms the graph of a homeomorphism towards the diagonal.  The graphs of $f:I\to I,x\mapsto x^{12}$ and the identity $I\to I$ are shown in black and the graphs of $\mathcal F(f,r)$ for $r=1/4,1/2,$ and $3/4$ are shown in gray.}
\end{figure}
A Polish group $G$ is {\em Roelcke precompact} if for every identity neighborhood $U\subset G$ there is a finite set $F\subset G$ so that $G=UFU$.  As noted in \cite{RoelckeDierolf} the Polish group $\Homeo_+(I)$ is Roelcke precompact.

\begin{question}
Is $\AC_+(I)$ Roelcke precompact?
\end{question}

\section{Topology and Geometry of the Zappa-Sz\'ep Product}\label{sect: ZSP}

A group $G$ is a {\em Zappa-Sz\'ep product} of subgroups $H$ and $K$ if $H\cap K=\{1\}$ and $G=HK$ \cite{Szep, Zappa} or equivalently, if the group operation $G\times G\to G$ restricts to a bijection $H\times K\to G$.  Every semidirect product is a Zappa-Sz\'ep product, and as with the semidirect product there is both an internal and external definition for the Zappa-Sz\'ep product.  Suppose $H$ and $K$ are any groups and $\alpha:K\times H\to H$ and $\beta:K\times H\to K$ are functions.  On $H\times K$ define a binary operation by
\[(h_1,k_1)(h_2,k_2)=(h_1\alpha(k_1,h_2),\beta(k_1,h_2)k_2)\]
for all $h_1,h_2\in H$ and all $k_1,k_2\in K$.  If (and only if) this operation makes $H\times K$ a group and also makes the injections
\[H\to H\times K,h\mapsto (h,1_K)\]
and
\[K\to H\times K,k\mapsto(1_H,k)\]
group homomorphisms, then the {\em external Zappa-Sz\'ep product} of $H$ and $K$ with respect to $\alpha$ and $\beta$ is $H\times K$ equipped with this operation.  The identity element in the external product is $(1_H,1_K)$ and the inverse of $(h,k)$ is
\[(\alpha(k^{-1},h^{-1}),\beta(k^{-1},h^{-1}))\]
for all $h\in H$ and all $k\in K$.  Of course, the external product of $H$ and $K$ is an internal product of the subgroups $H\times\{1_K\}$ and $\{1_H\}\times K$.

Given an internal Zappa-Sz\'ep product $G$ of subgroups $H$ and $K$ there are functions $\alpha:K\times H\to H$ and $\beta:K\times H\to K$ which are uniquely determined by the equation
\[kh=\alpha(k,h)\beta(k,h)\]
for all $h\in H$ and $k\in K$.  The binary operation defined above on $H\times K$ makes it the external Zappa-Sz\'ep product with respect to $\alpha$ and $\beta$ and makes the bijection $H\times K\to G,(h,k)\mapsto hk$ a group isomorphism.  It follows that there is a natural correspondence between internal and external Zappa-Sz\'ep products which is analogous to the correspondence which holds for semidirect products.

For a group $G$ with subgroups $H$ and $K$ the subset $HK$ of $G$ is a subgroup if and only if $HK=KH$, so in the definition of the Zappa-Sz\'ep product the two factor subgroups play a symmetric role.  We set aside some notation that will help us study our main examples in which the two factor subgroups play very different roles.

For any set $X$ we let $\Bij(X)$ denote the group of all bijections of $X$ with composition as the group operation.  For any $S,T\subset \Bij(X)$ and $Y\subset X$ we let
\[S\circ T=\{f\circ g\vert f\in S, g\in T\}\]
and
\[S(Y)=\{f(x)\vert f\in S,x\in Y\}.\]
So if $X=H$ is a group then $\Bij(H)$ denotes the group of all bijections of the underlying set $H$.  Similarly if $H$ is a topological group then $\Homeo(H)$ denotes the group of all homeomorphisms of the underlying space $H$.  For a group $H$ and an element $h\in H$ we let $\lambda_h:H\to H$ be left translation $h\mapsto hx$, and we let
\[\Lambda_H=\left\{\lambda_h\vert h\in H\right\}\]
denote the subgroup of $\Bij(H)$ consisting of left translations.

\begin{notation}\label{notation}
Suppose $H$ is a group and $G\leq\Bij(H)$ is a subgroup such that $\Lambda_H\leq G$.  We let
\[K_G=\left\{k\in G\vert k\left(1_H\right)=1_H\right\}\]
denote the isotropy subgroup of $1_H$ in $G$.  We let
\[\Omega:H\times K_G\to G\]
be the function $(h,k)\mapsto \lambda_h\circ k$.
\end{notation}

The assumption $\Lambda_H\leq G$ in Notation \ref{notation} ensures that $G$ is a Zappa-Sz\'ep product of $\Lambda_H$ and $K_G$.  It is clear that $\Lambda_H\cap K_G$ is the trivial subgroup of $G$.  To see $G=\Lambda_H\circ K_G$ note that any $g\in G$ may be decomposed
\[g=\lambda_{g\left(1_H\right)}\circ\left(\lambda_{g\left(1_H\right)}^{-1}\circ g\right)\]
and the composition in parentheses is an element of $K_G$.  Similarly the assumption $\Lambda_H\leq G$ ensures that $\Omega$ is a bijection.

In Section \ref{sect: AC(S) and AC_Z(R)} we take $G=\AC_+\left(\mathbb S^1\right)$ (with $H=\mathbb S^1$) and $G=\AC_\Z^\loc(\R)$ (with $H=\R$).  In both of these cases the isotropy subgroup $K_G$ is isomorphic to $\AC_+(I)$, and as there is already a known Polish group topology on $\AC_+(I)$ the bijection $\Omega$ suggests one should consider the product topology $H\times\AC_+(I)$ on $G$.  Before moving on to these examples we take a few moments to record a number of observations about the topology and geometry of the Zappa-Sz\'ep product.

Suppose $G$ is a topological group which is a Zappa-Sz\'ep product of subgroups $H$ and $K$, then the group operation restricts to a continuous bijection $H\times K\to G$ on the product space $H\times K$.  The following is \cite[Theorem A.3]{RoseCoarse}.

\begin{thm}[Rosendal]\label{thm: group op is homeo}
Suppose $G$ is a Polish group which is a Zappa-Sz\'ep product of closed subgroups $H$ and $K$.  Then the group operation is a homeomorphism $H\times K\to G$.
\end{thm}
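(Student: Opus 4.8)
The plan is to prove that the continuous bijection $m\colon H\times K\to G$, $(h,k)\mapsto hk$, is open by exhibiting the two coordinates of $m^{-1}$ as continuous maps. Write $p\colon G\to H$ and $q\colon G\to K$ for the (\emph{a priori} only set-theoretically defined) maps determined by the Zappa--Sz\'ep decomposition, so that $g=p(g)\,q(g)$ for all $g\in G$. Since $q(g)=p(g)^{-1}g$ and multiplication and inversion in $G$ are continuous, and $H$ and $K$ carry the subspace topology, continuity of $q$ follows from continuity of $p$; so it suffices to prove that $p$ is continuous. (Note $H$ and $K$ are themselves Polish, being closed subgroups, so $H\times K$ is Polish and the statement makes sense.)

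The key geometric input is the coset space $G/K$ of left cosets $gK$. Because $K$ is a closed subgroup of the Polish group $G$, the space $G/K$ is Polish, and both $G$ and its closed subgroup $H$ act on $G/K$ continuously by left translation. The hypothesis $G=HK$ makes the $H$-action transitive: given cosets $g_1K,g_2K$, decompose $g_i=h_ik_i$ and observe that $h_2h_1^{-1}\cdot g_1K=h_2K=g_2K$. I would then apply the Effros Open Mapping Theorem to the transitive continuous action of the Polish group $H$ on the Polish space $G/K$ to conclude that the orbit map $\sigma\colon H\to G/K$, $h\mapsto hK$, is open. This $\sigma$ is continuous, and it is a bijection because $H\cap K=\{1\}$ forces $h_1K=h_2K\Rightarrow h_1=h_2$ while $G=HK$ gives surjectivity; hence $\sigma$ is a homeomorphism.

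To finish, let $\pi_K\colon G\to G/K$ be the (continuous) quotient map. For $g=hk$ we have $\pi_K(g)=hkK=hK=\sigma(h)$, whence $\sigma^{-1}\circ\pi_K=p$. Thus $p$ is a composition of continuous maps, hence continuous; by the reduction in the first paragraph $q$ is continuous as well, so $m^{-1}=(p,q)\colon G\to H\times K$ is continuous and $m$ is a homeomorphism.

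I expect the only genuine obstacle to be locating the right tool rather than any computation: one must verify the (routine) hypotheses of Effros's theorem, namely that $G/K$ is Polish for $K$ closed and that the $H$-action on it is continuous and transitive, and one should be aware that a more hands-on Baire category and Pettis argument does not obviously close the gap here. Indeed, the asymmetry of the Zappa--Sz\'ep product, as opposed to a semidirect product where $K$ would normalize $H$, prevents the usual trick of passing to $AA^{-1}$ from returning a set of the controlled form $N_HK$ needed to witness continuity of $p$ at the identity, which is why routing through the coset space and Effros's theorem is the efficient path.
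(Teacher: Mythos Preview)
Your argument via the Effros open mapping theorem is correct: $G/K$ is Polish because $K$ is closed, the restricted $H$--action on $G/K$ is continuous and transitive, so the orbit map $\sigma\colon H\to G/K$, $h\mapsto hK$, is open; being also a continuous bijection it is a homeomorphism, and then $p=\sigma^{-1}\circ\pi_K$ is continuous, from which continuity of $q$ and of $m^{-1}$ follow as you say.

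There is, however, nothing to compare against in the paper itself: the theorem is stated with attribution to Rosendal and cited as \cite[Theorem A.3]{RoseCoarse} without proof. Your Effros-based route is the standard way to establish results of this shape (a continuous bijection from a Polish group onto a homogeneous Polish space is open), and your closing remark correctly identifies why a bare Pettis-type argument is awkward here.
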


\begin{coro}\label{coro: product of closures is closure of product}
Suppose $G$ is a Polish group which is a Zappa-Sz\'ep product of closed subgroups $H$ and $K$.  Then $\overline{S}\thinspace\overline{T}=\overline{ST}$ for any subsets $S\subset H$ and $T\subset K$.
\end{coro}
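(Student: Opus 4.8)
The plan is to read this off directly from Theorem~\ref{thm: group op is homeo} using two elementary topological facts: in a product space the closure of a rectangle is the rectangle of closures, and a homeomorphism maps the closure of a set onto the closure of its image. Let $m\colon H\times K\to G$ denote the restriction of the group operation; Theorem~\ref{thm: group op is homeo} says $m$ is a homeomorphism of the product space $H\times K$ onto $G$. For subsets $S\subset H$ and $T\subset K$ one has, by the very definition of $m$, that $m(S\times T)=ST$ and $m(\overline S\times\overline T)=\overline S\,\overline T$, the latter understood as $\{ab\mid a\in\overline S,\ b\in\overline T\}$.

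The first thing I would pin down is that the closures appearing in the statement are unambiguous: since $H$ is closed in $G$, the closure of $S$ taken inside $H$ coincides with the closure of $S$ taken inside $G$, and likewise for $T$ inside $K$, so writing $\overline S$ and $\overline T$ creates no confusion. Next, in the product topology on $H\times K$ one has $\overline{S\times T}=\overline S\times\overline T$. Applying $m$ and using that a homeomorphism commutes with the closure operation, $\overline{ST}=\overline{m(S\times T)}=m\bigl(\overline{S\times T}\bigr)=m\bigl(\overline S\times\overline T\bigr)=\overline S\,\overline T$, as desired.

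I do not expect a real obstacle. The only step requiring a moment's attention is the identification of the subspace closures in $H$ and in $K$ with the corresponding closures in $G$, which is precisely the place where the hypothesis that $H$ and $K$ are closed subgroups is used; the rest of the argument is formal and uses nothing about $G$ beyond Theorem~\ref{thm: group op is homeo}.
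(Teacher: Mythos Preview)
Your proof is correct and follows essentially the same approach as the paper: both arguments rest on Theorem~\ref{thm: group op is homeo}, with the paper proving the two inclusions separately (continuity gives $\overline S\,\overline T\subset\overline{ST}$, and closedness of $\overline S\,\overline T$ as the homeomorphic image of $\overline S\times\overline T$ gives the reverse), while you package both at once via ``homeomorphisms commute with closure'' together with $\overline{S\times T}=\overline S\times\overline T$. Your remark about identifying subspace closures with ambient closures is a nice point of care that the paper leaves implicit.
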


\begin{proof}
Let $S\subset H$ and $T\subset K$.  Continuity of the group operation $H\times K\to G$ implies $\overline S\thinspace\overline T\subset\overline{ST}$, and by Theorem \ref{thm: group op is homeo} $\overline{S}\thinspace\overline{T}$ is a closed subset of $G$ which contains $ST$, so $\overline{ST}\subset \overline{S}\thinspace\overline{T}$.
\end{proof}

\begin{coro}\label{coro: conditions on Polish group of homeomorphisms}
Suppose $H$ is a Polish group and $G\leq\Homeo(H)$ is a subgroup such that $\Lambda_H\leq G$.  Let $\Omega:H\times K_G\to G$ be the bijection from Notation \ref{notation}.  Suppose $G$ is equipped with a Polish group topology such that $H\to\Lambda_H,h\mapsto \lambda_h$ is a homeomorphism onto the subspace $\Lambda_H$ and $K_G$ is closed.  Then $\Omega$ is a homeomorphism.
\end{coro}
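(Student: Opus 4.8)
The plan is to deduce Corollary~\ref{coro: conditions on Polish group of homeomorphisms} from Theorem~\ref{thm: group op is homeo} applied to the Zappa-Sz\'ep decomposition $G=\Lambda_H\circ K_G$. First I would note that the hypotheses of that theorem are met: by the discussion following Notation~\ref{notation}, $G$ is a Zappa-Sz\'ep product of $\Lambda_H$ and $K_G$; the factor $K_G$ is assumed closed in $G$; and $\Lambda_H$ is closed because it is the image of the Polish group $H$ under the map $h\mapsto\lambda_h$, which by assumption is a homeomorphism onto its image, so $\Lambda_H$ is a Polish (hence completely metrizable, hence closed-in-any-Polish-overgroup) subspace of $G$. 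Therefore Theorem~\ref{thm: group op is homeo} gives that the group operation restricts to a homeomorphism $\Lambda_H\times K_G\to G$.

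Next I would factor $\Omega$ through this homeomorphism. By definition $\Omega(h,k)=\lambda_h\circ k$, which is exactly the composite
\[
H\times K_G\xrightarrow{\ (h,k)\mapsto(\lambda_h,k)\ }\Lambda_H\times K_G\xrightarrow{\ \text{group operation}\ }G.
\]
The second arrow is a homeomorphism by the previous paragraph. The first arrow is a homeomorphism because it is the product of the map $h\mapsto\lambda_h$, which is a homeomorphism $H\to\Lambda_H$ by hypothesis, with the identity on $K_G$; a product of homeomorphisms is a homeomorphism. Hence $\Omega$, being a composite of two homeomorphisms, is a homeomorphism.

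I do not expect a serious obstacle here: the statement is essentially a bookkeeping corollary that repackages Theorem~\ref{thm: group op is homeo} in the coordinates supplied by Notation~\ref{notation}. The one point deserving a line of care is the claim that $\Lambda_H$ is closed in $G$, since Theorem~\ref{thm: group op is homeo} requires \emph{both} factors to be closed and the hypothesis only directly asserts closedness of $K_G$. This is handled by the standard fact that a subspace of a Polish space which is itself Polish (equivalently, completely metrizable) is necessarily $G_\delta$, and a subgroup that is $G_\delta$ in a Polish group is closed; since $H\to\Lambda_H$ is a homeomorphism, $\Lambda_H$ inherits complete metrizability from $H$, so this applies. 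Everything else is the routine observation that products and composites of homeomorphisms are homeomorphisms.
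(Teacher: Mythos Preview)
Your proof is correct and follows essentially the same route as the paper: factor $\Omega$ as the product map $(h,k)\mapsto(\lambda_h,k)$ followed by the group operation $\Lambda_H\times K_G\to G$, then invoke Theorem~\ref{thm: group op is homeo} after noting that $\Lambda_H$ is closed because it is Polish. The paper's only difference is brevity---it writes ``by virtue of being Polish it follows that $\Lambda_H$ is a closed subgroup of $G$'' without spelling out the $G_\delta$ argument you supply.
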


\begin{proof}
Let $\Phi:H\times K_G\to \Lambda_H\times K_G$ be $(h,k)\mapsto(\lambda_h,k)$ and let $\Psi:\Lambda_H\times K_G\to G$ be composition of functions.

By assumption $H\to\Lambda_H,h\mapsto\lambda_h$ is a homeomorphism and so $\Phi$ is a homeomorphism as well.  By virtue of being Polish it follows that $\Lambda_H$ is a closed subgroup of $G$ and so applying Theorem \ref{thm: group op is homeo} we get that $\Psi$ is also a homeomorphism.  As $\Omega=\Psi\circ\Phi$ this implies $\Omega$ is a homeomorphism.
\end{proof}

Without the topological assumptions in Corollary \ref{coro: conditions on Polish group of homeomorphisms} it is possible to have a subgroup $G\leq\Homeo_+(H)$ and a Polish group topology on $G$ for which $\Omega$ is not a homeomorphism.  If $H$ is a group that supports multiple Polish group topologies then considering one topology on $H$ and another topology on $G=\Lambda_H$ yields such a counterexample.

\begin{prop}\label{prop: product topology is group}
Suppose $H$ is a topological group and $G\leq\Homeo(H)$ is a subgroup such that $\Lambda_H\leq G$.  Let $\Omega:H\times K_G\to G$ be the bijection from Notation \ref{notation}.  Also suppose there is a topology on $K_G$ which makes it a topological group.  Then there is a unique topology on $G$ which makes $\Omega$ a homeomorphism.  This topology makes $G$ a topological group if and only if evaluation
\[K_G\times H\to H,(k,h)\mapsto k(h)\]
and the function
\[K_G\times H\to K_G,(k,h)\mapsto\lambda_{k(h)}^{-1}\circ k\circ\lambda_h\]
are continuous.  If $d_H$ and $d_K$ are compatible metrics on $H$ and $K_G$, respectively, then $d$ defined
\[d(f,g)=d_H\left(f\left(1_H\right),g\left(1_H\right)\right)+d_K\left(\lambda_{f\left(1_H\right)}^{-1}\circ f,\lambda_{g\left(1_H\right)}^{-1}\circ g\right)\]
is a compatible metric on $G$.
\end{prop}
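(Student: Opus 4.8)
The plan is to transport every structure in sight through the bijection $\Omega$. For the first claim, observe that a topology on $G$ makes $\Omega\colon H\times K_G\to G$ a homeomorphism precisely when it equals $\{\Omega(U):U\subseteq H\times K_G\text{ open}\}$; this is a single well-defined topology (it exists, being the pushforward of the product topology, and any two topologies making $\Omega$ a homeomorphism agree, since then $\mathrm{id}_G$ would be a homeomorphism between them), so from now on $G$ carries it and $\Omega$ is a homeomorphism by construction. For the last claim, since $\Omega^{-1}(f)=\bigl(f(1_H),\ \lambda_{f(1_H)}^{-1}\circ f\bigr)$, the displayed quantity $d$ is exactly the $\Omega$-pullback of the sum $d_H\oplus d_K$ of the given metrics on $H\times K_G$; that sum is a metric inducing the product topology, so its pullback $d$ is a metric inducing the topology just placed on $G$. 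It remains to characterize when the group operations of $G$, written in $\Omega$-coordinates, are continuous.

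The computation rests on one identity: for $k\in K_G$ and $h\in H$ the map $\lambda_{k(h)}^{-1}\circ k\circ\lambda_h$ lies in $G$ (because $\Lambda_H\le G$) and fixes $1_H$, hence lies in $K_G$; abbreviate it $\beta(k,h)$, so $\beta$ is the second function named in the statement and $k\circ\lambda_h=\lambda_{k(h)}\circ\beta(k,h)$. Then, using $\lambda_a\circ\lambda_b=\lambda_{ab}$,
\[
\Omega(h_1,k_1)\circ\Omega(h_2,k_2)=\lambda_{h_1}\circ k_1\circ\lambda_{h_2}\circ k_2=\lambda_{h_1\cdot k_1(h_2)}\circ\bigl(\beta(k_1,h_2)\circ k_2\bigr),
\]
and $\beta(k_1,h_2)\circ k_2\in K_G$ since both factors fix $1_H$, so in $\Omega$-coordinates multiplication is $(h_1,k_1)(h_2,k_2)=\bigl(h_1\cdot k_1(h_2),\ \beta(k_1,h_2)\circ k_2\bigr)$. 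Likewise $\Omega(h,k)^{-1}=k^{-1}\circ\lambda_{h^{-1}}=\lambda_{k^{-1}(h^{-1})}\circ\beta(k^{-1},h^{-1})$, so inversion is $(h,k)\mapsto\bigl(k^{-1}(h^{-1}),\ \beta(k^{-1},h^{-1})\bigr)$.

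Now the middle claim follows. The coordinate functions of both multiplication and inversion are built solely from the (continuous) group operations of $H$ and of $K_G$, the evaluation map $(k,h)\mapsto k(h)$, and the map $\beta$; hence if evaluation and $\beta$ are continuous then multiplication and inversion are continuous and $G$ is a topological group. Conversely, if $G$ is a topological group then its multiplication is continuous, and composing it with the continuous map $K_G\times H\to(H\times K_G)^2$, $(k,h)\mapsto\bigl((1_H,k),(h,\mathrm{id}_H)\bigr)$, yields $(k,h)\mapsto\bigl(k(h),\ \beta(k,h)\bigr)$; projecting onto the two factors shows evaluation and $\beta$ are each continuous.

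I expect the only real work to be bookkeeping: pinning down the $\Omega$-coordinate formulas for multiplication and inversion with all composites correctly placed in $H$, in $K_G$, or in $\Lambda_H\le G$. It is worth recording one bonus: continuity of inversion is automatic, since its $\Omega$-coordinate formula uses nothing beyond evaluation, $\beta$, and the inversions of $H$ and $K_G$; thus ``multiplication is continuous'' already forces that $G$ is a topological group, which is why the characterization needs only the two stated conditions.
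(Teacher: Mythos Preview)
Your proof is correct and follows essentially the same approach as the paper: transport the group structure through $\Omega$, compute the multiplication in $H\times K_G$ coordinates via the factorization $k\circ\lambda_h=\lambda_{k(h)}\circ\beta(k,h)$, and read off that continuity of the group operations is equivalent to continuity of evaluation and $\beta$. The paper phrases the intermediate step in the language of the external Zappa--Sz\'ep product, but the computation and the converse argument (specializing the product to $(1_H,k)(h,1_{K_G})$) are the same as yours.
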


\begin{proof}
The unique topology on $G$ which makes $\Omega$ a homeomorphism is clearly the one obtained by declaring $U\subset G$ open if and only if $\Omega^{-1}(U)\subset H\times K_G$ is open.

For all $h\in H$ and all $k\in K_G$ let $\phi(k,h)=k(h)$ and let $\psi(k,h)=\lambda_{k(h)}^{-1}\circ k\circ\lambda_h$.  Let $\otimes$ be the binary operation on $H\times K_G$ defined by
\[(h_1,k_1)\otimes(h_2,k_2)=(h_1\phi(k_1,h_2),\psi(k_1,h_2)\circ k_2)\]
for all $h_1,h_2\in H$ and all $k_1,k_2\in K_G$.  Then
\begin{align*}
\Omega(h_1,k_1)\circ\Omega(h_2,k_2)&=\lambda_{h_1}\circ k_1\circ\lambda_{h_2}\circ k_2\\
&=\lambda_{h_1k_1(h_2)}\circ\left(\lambda_{k_1(h_2)}^{-1}\circ k_1\circ \lambda_{h_2}\circ k_2\right)\\
&=\Omega\left( h_1\phi(k_1,h_2),\psi(k_1,h_2)\circ k_2\right)\\
&=\Omega\left((h_1,k_1)\otimes(h_2,k_2)\right)
\end{align*}
for all $h_1,h_2\in H$ and all $k_1,k_2\in K_G$.  This says $\Omega:(H\times K_G,\otimes)\to (G,\circ)$ is an operation-preserving bijection and thus a group isomorphism.  Indeed, $(H\times K_G,\otimes)$ is the external Zappa-Sz\'ep product of $H$ and $K_G$ with respect to $\phi$ and $\psi$.

The proposition states the equivalence between (1) and (3) among the following three equivalent conditions.

\begin{enumerate}
\item $G$ is a topological group with the product topology from $\Omega$.
\item $(H\times K_G,\otimes)$ is a topological group with the product topology.
\item The functions $\phi$ and $\psi$ are continuous.
\end{enumerate}

The equivalence between (1) and (2) is immediate because $\Omega$ is a group isomorphism and a homeomorphism.

Suppose for a moment that $H$ and $K$ are some arbitrary topological groups and $G=H\times K$ is an external Zappa-Sz\'ep product with respect to functions $\alpha:K\times H\to H$ and $\beta:K\times H\to K$. The claim is that the product topology $H\times K$ makes $G$ a topological group if and only if $\alpha$ and $\beta$ are continuous.  For one direction, if $\alpha$ and $\beta$ are continuous then the group operation and inversion in $G$ have continuous coordinate functions and so are continuous themselves.  For the reverse direction, if the group operation in $G$ is continuous with respect to the product topology $H\times K$ then
\[(k,h)\mapsto (1_H,k)(h,1_K)=(\alpha(k,h),\beta(k,h))\]
defines a continuous function $K\times H\to H\times K$ and so $\alpha$ and $\beta$ are continuous.  Now returning to the setting of the current proposition, $(H\times K_G,\otimes)$ is the external Zappa-Sz\'ep product with respect to $\phi$ and $\psi$ so (2) and (3) are equivalent.

If $d_H$ and $d_K$ are compatible metrics on $H$ and $K_G$, respectively, then $D$ defined
\[D((h_1,k_1),(h_2,k_2))=d_H(h_1,h_2)+d_K(k_1,k_2)\]
for all $h_1,h_2\in H$ and all $k_1,k_2\in K_G$ is a metric on $H\times K$ which is compatible with the product topology.  With $d$ as in the proposition we have
\[D((h_1,k_1),(h_2,k_2))=d(\Omega(h_1,k_1),\Omega(h_2,k_2))\]
so $d$ is a metric on $G$ and $\Omega:(H\times K_G, D)\to (G, d)$ is an isometry.  Hence $d$ is compatible with the topology on $G$.
\end{proof}

The product of two Polish spaces is a Polish space, and so in our applications of Proposition \ref{prop: product topology is group} once we know $G$ is a topological group it becomes obvious it is a Polish group.

We now turn our attention to the large-scale geometry of the Zappa-Sz\'ep product.  Recall that a subset of a group is {\em symmetric} if it is closed under inversion.

\begin{prop}\label{prop: word metric}
Suppose $G$ is a Zappa-Sz\'ep product of subgroups $H$ and $K$.  Also suppose $H$ is generated by a symmetric subset $S\subset H$ and $K$ is generated by a symmetric subset $T\subset K$ with $1\in S\cap T$ and $ST=TS$.  Then the word metric $\rho_{ST}$ is defined on $G$ and
\[\rho_{ST}(hk,1)=\max\{\rho_{S}(h,1),\rho_{T}(k,1)\}\]
for all $h\in H$ and $k\in K$.  Consequently, the inclusions of the two factor subgroups $(H,\rho_S)\to(G,\rho_{ST})$ and $(K,\rho_T)\to(G,\rho_{ST})$ are isometric embeddings.
\end{prop}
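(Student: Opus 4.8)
The plan is to reduce everything to a single combinatorial identity about subsets of $G$. First I would record that $ST$ is a legitimate symmetric generating set: it is symmetric because $(ST)^{-1}=T^{-1}S^{-1}=TS=ST$ using that $S,T$ are symmetric and $ST=TS$; it contains $1$ because $1\in S\cap T$; and it generates $G$ because $S\subset ST$ and $T\subset ST$ (as $1\in S\cap T$) give $H=\langle S\rangle\subseteq\langle ST\rangle$ and $K=\langle T\rangle\subseteq\langle ST\rangle$, hence $G=HK\subseteq\langle ST\rangle$. Thus $\rho_{ST}$ is a well-defined left-invariant metric on $G$. Since $ST$ is symmetric and contains $1$, the closed word ball of radius $n$ about the identity is exactly the $n$-fold product $(ST)^n$ (with the convention $(ST)^0=\{1\}$), and these balls are nested, so $\rho_{ST}(g,1)=\min\{n\ge 0: g\in(ST)^n\}$; the analogous statements hold for $S$ in $H$ and $T$ in $K$, giving $\rho_S(h,1)\le n\iff h\in S^n$ and $\rho_T(k,1)\le n\iff k\in T^n$.

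The crux is the identity $(ST)^n=S^nT^n$ for all $n\ge 0$, and this is the one place the hypothesis $ST=TS$ is used. I would first prove by induction on $j$ that $T^jS=ST^j$ as subsets of $G$: the case $j=0$ is immediate, and $T^{j+1}S=T(T^jS)=T(ST^j)=(TS)T^j=(ST)T^j=ST^{j+1}$. Then by induction on $n$,
\[(ST)^n=(ST)^{n-1}(ST)=S^{n-1}T^{n-1}ST=S^{n-1}(T^{n-1}S)T=S^{n-1}(ST^{n-1})T=S^nT^n.\]

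With the identity established, fix $h\in H$ and $k\in K$. If $h\in S^n$ and $k\in T^n$ then $hk\in S^nT^n=(ST)^n$. Conversely, if $hk\in(ST)^n=S^nT^n$, write $hk=(s_1\cdots s_n)(t_1\cdots t_n)$ with $s_i\in S$ and $t_j\in T$; since $s_1\cdots s_n\in H$ and $t_1\cdots t_n\in K$ and the multiplication map restricts to a \emph{bijection} $H\times K\to G$ (this is the definition of $G$ being a Zappa-Sz\'ep product of $H$ and $K$), uniqueness of the decomposition forces $h=s_1\cdots s_n\in S^n$ and $k=t_1\cdots t_n\in T^n$. Hence $hk\in(ST)^n$ iff $h\in S^n$ and $k\in T^n$, i.e.\ iff $\rho_S(h,1)\le n$ and $\rho_T(k,1)\le n$; taking the least such $n$ yields $\rho_{ST}(hk,1)=\max\{\rho_S(h,1),\rho_T(k,1)\}$. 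For the last sentence I would use left-invariance of word metrics: for $h,h'\in H$, $\rho_{ST}(h,h')=\rho_{ST}(h^{-1}h',1)=\rho_{ST}\bigl((h^{-1}h')\cdot 1_K,1\bigr)=\max\{\rho_S(h^{-1}h',1),\rho_T(1_K,1)\}=\rho_S(h,h')$, and symmetrically $\rho_{ST}(k,k')=\rho_T(k,k')$ for $k,k'\in K$, so the two factor inclusions are isometric embeddings.

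I do not anticipate a real obstacle: the only substantive step is the set identity $(ST)^n=S^nT^n$, and everything else is bookkeeping with the uniqueness of the Zappa-Sz\'ep normal form and the elementary observation that a word ball in a symmetric generating set containing $1$ is a power of that set. The points to be careful about are that all the displayed equalities are identities of subsets of $G$ rather than of elements, and that the degenerate cases ($n=0$, or $h=1_H$, or $k=1_K$) are covered by the conventions $S^0=T^0=\{1\}$ together with $1\in S\cap T$.
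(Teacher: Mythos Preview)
Your proof is correct and follows essentially the same approach as the paper's: both establish the set identity $(ST)^n=S^nT^n$ from the hypothesis $ST=TS$, then use the bijectivity of the multiplication map $H\times K\to G$ to conclude that $hk\in(ST)^n$ iff $h\in S^n$ and $k\in T^n$, and finish the isometric-embedding statement by left-invariance. Your write-up is more explicit in verifying that $ST$ is a symmetric generating set and in carrying out the induction for $(ST)^n=S^nT^n$, but the substance is the same.
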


\begin{proof}For any integer $n\geq 0$ we have $S^nT^n=(ST)^n$ by repeatedly applying the assumption $ST=TS$.

Let $h\in H$ and $k\in K$ and set $M=\max\{\rho_{S}(h,1),\rho_{T}(k,1)\}$.  Because $1\in S\cap T$ we have $hk\in S^MT^M=(ST)^M$ so $\rho_{ST}$ is defined on $G$ and $\rho_{ST}(hk,1)\leq M$.  If $n\geq0$ is an integer with $\rho_{ST}(hk,1)\leq n$ then $hk\in (ST)^n=S^nT^n$ so there exists $s_1,\ldots,s_n\in S$ and $t_1,\ldots, t_n\in T$ with
\[hk=s_1\cdots s_nt_1\cdots t_n\]
and becuase the group opertion $H\times K\to G$ is injective this implies $h=s_1\cdots s_n$ and $k=t_1\cdots t_n$, so $\rho_S(h,1)\leq n$ and $\rho_T(k,1)\leq n$.  This holds for any $n\geq0$ so $\rho_{ST}(hk,1)\geq \rho_S(h,1)$ and $\rho_{ST}(hk,1)\geq\rho_T(k,1)$, and hence $\rho_{ST}(hk,1)\geq M$.  This proves the equality in the proposition.

Now
\[\rho_{ST}(h_1^{-1}h_2,1)=\max\{\rho_S(h_1^{-1}h_2,1),\rho_T(1,1)\}=\rho_S(h_1^{-1}h_2,1)\]
for all $h_1,h_2\in H$.  By left invariance it follows that inclusion $(H,\rho_S)\to(G,\rho_{ST})$ is an isometric embedding.  Similarly $(K,\rho_T)\to (G,\rho_{ST})$ is an isometric embedding.
\end{proof}

\begin{thm}\label{thm: QI to factor subgroup}
Suppose $G$ is a Polish group which is a Zappa-Sz\'ep product of closed subgroups $H$ and $K$.  Also suppose $H$ is generated by a subset $S\subset H$ which is coarsely bounded in $H$, $K$ is a coarsely bounded group when equipped with the subspace topology, and $SK=KS$.  Then inclusion $H\to G$ is a quasi-isometry of Polish groups.
\end{thm}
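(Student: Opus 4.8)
The plan is to produce an explicit coarsely bounded closed generating set for $G$, match it up with the word metrics representing the quasi-isometry types of $H$ and of $G$, and then read off from Proposition~\ref{prop: word metric} that inclusion $H\to G$ is an isometric embedding with coarsely dense image.

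First I would make two harmless reductions on $S$. Replacing $S$ by $\overline{S\cup S^{-1}\cup\{1_H\}}$, I may assume $S$ is symmetric, closed, and contains $1_H$: this set still generates $H$, is still coarsely bounded in $H$ (closures and finite unions of coarsely bounded sets are coarsely bounded), and still satisfies $SK=KS$ --- the only point needing attention being that inverting the identity $SK=KS$ and using $K^{-1}=K$ yields $S^{-1}K=KS^{-1}$, so adjoining $S^{-1}$ preserves the hypothesis. Put $T=K$, a symmetric closed subgroup of $G$ that contains $1_G$, generates $K$, and satisfies $ST=TS$. Now observe that a compatible left-invariant metric $d$ on $G$ restricts to a compatible left-invariant metric on each of the closed subgroups $H$ and $K$, so $S$ (coarsely bounded in $H$) and $K$ (coarsely bounded as a group in its subspace topology) each have finite $d$-diameter; by left invariance $d(sk,1_G)\le d(s,1_G)+d(k,1_G)$, so $SK$ has finite $d$-diameter, and since $d$ was arbitrary $SK$ is coarsely bounded in $G$. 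The set $SK$ is also closed --- by Corollary~\ref{coro: product of closures is closure of product} applied to the closed sets $S\subseteq H$ and $K\subseteq K$ we get $SK=\overline S\,\overline K=\overline{SK}$ --- and it generates $G$, since $1_G\in S\cap K$ gives $S\cup K\subseteq SK$ and $\langle S\cup K\rangle\supseteq\langle S\rangle K=HK=G$. Thus $(G,\rho_{SK})$ represents the quasi-isometry type of $G$, and likewise $(H,\rho_S)$ represents that of $H$.

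Next I would apply Proposition~\ref{prop: word metric} with this $S$ and with $T=K$: all its hypotheses hold, so $\rho_{SK}$ is a genuine word metric on $G$ and inclusion $(H,\rho_S)\to(G,\rho_{SK})$ is an isometric embedding. To finish, I would check its image is $1$-dense. Given $g\in G$, write $g=hk$ with $h\in H$ and $k\in K$ as in the Zappa-Sz\'ep decomposition; by left invariance and the formula of Proposition~\ref{prop: word metric}, $\rho_{SK}(g,h)=\rho_{SK}(k,1_G)=\max\{\rho_S(1_H,1_H),\rho_T(k,1_G)\}=\rho_T(k,1_G)\le 1$, where the last step uses $T=K=\langle T\rangle$, so $k\in T^1$. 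Hence every point of $G$ is within $\rho_{SK}$-distance $1$ of $H$, so the isometric embedding $H\to G$ is a quasi-isometry. Since any two metrics representing the quasi-isometry type of a Polish group are quasi-isometric via the identity map, composing then shows $(H,d_H)\to(G,d_G)$ is a quasi-isometry for all $d_H,d_G$ representing the respective quasi-isometry types; that is, inclusion $H\to G$ is a quasi-isometry of Polish groups.

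I do not expect a serious obstacle here --- the argument is essentially bookkeeping around Proposition~\ref{prop: word metric}. The one point genuinely worth care is verifying that the hypothesis $SK=KS$ survives the symmetrization of $S$ (via $KS^{-1}=S^{-1}K$), since Proposition~\ref{prop: word metric} requires a symmetric generating set; a secondary minor point is the metric-restriction argument, which is what lets coarse boundedness of $S$ in $H$ and of $K$ in itself be promoted to coarse boundedness of $SK$ in the ambient group $G$.
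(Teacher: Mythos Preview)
Your proposal is correct and follows essentially the same route as the paper: symmetrize and close $S$, verify $SK=KS$ survives (via $S^{-1}K=KS^{-1}$ and Corollary~\ref{coro: product of closures is closure of product}), apply Proposition~\ref{prop: word metric} to get an isometric embedding which is $1$-dense, and check $SK$ is coarsely bounded in $G$ by restricting compatible left-invariant metrics to the factors. The one place your writeup is a hair thinner than the paper's is that passing to the closure of $S$ also requires justification that $\overline{\mathscr S}K=K\overline{\mathscr S}$; the paper handles this explicitly via Corollary~\ref{coro: product of closures is closure of product}, which you already invoke elsewhere.
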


\begin{proof}Set $\mathscr S=S\cup\{1\}\cup S^{-1}$.  As $K=K^{-1}$ and $SK=KS$ we have
\[S^{-1}K=(KS)^{-1}=(SK)^{-1}=KS^{-1}\]
and
\[\mathscr S K=SK\cup K\cup S^{-1}K=KS\cup K\cup KS^{-1}=K\mathscr S\]
so by Corollary \ref{coro: product of closures is closure of product}
\[\overline{\mathscr S}K=\overline{\mathscr SK}=\overline{K\mathscr S}=K\overline{\mathscr S}\]
because $\overline{\mathscr S}$ and $K$ are closed.  Now by Proposition \ref{prop: word metric} inclusion $(H,\rho_{\overline{\mathscr S}})\to (G,\rho_{\overline{\mathscr S}K})$ is an isometric embedding.

For every $g\in G$ there is $h\in H$ and $k\in K$ with $g=hk$, and by left invariance
\[\rho_{\overline{\mathscr S}K}(g,h)=\rho_{\overline{\mathscr S}K}(k,1)\leq 1\]
so inclusion $(H,\rho_{\overline{\mathscr S}})\to (G,\rho_{\overline{\mathscr S}K})$ is coarsely onto.  Hence inclusion $(H,\rho_{\overline{\mathscr S}})\to (G,\rho_{\overline{\mathscr S}K})$ is a quasi-isometry of metric spaces.

As $\overline{\mathscr S}$ is a symmetric generating set for $H$ which is closed and coarsely bounded in $H$ the quasi-isometry type of $H$ is that of $(H,\rho_{\overline{\mathscr S}})$.  It remains to show that the quasi-isometry type of $G$ is that of $(G,\rho_{\overline{\mathscr S}K})$.  We know $\overline{\mathscr S}K=\overline{\mathscr S K}$ is a generating set for $G$ which is closed, so we must show $\overline{\mathscr S}K$ is coarsely bounded in $G$.  Let $d$ be a compatible, left-invariant metric on $G$.  Then $d$ restricts to a compatible, left-invariant metric on both $H$ and $K$.  The subsets $\overline{\mathscr S}$ and $K$ are coarsely bounded in $H$ and $K$, respectively, so these subsets are bounded in $d$.  By left invariance and the triangle inequality it follows that $\overline{\mathscr S}K$ is bounded in $d$, and thus $\overline{\mathscr S}K$ is coarsely bounded in $G$.  So the quasi-isometry type of $G$ is that of $(G,\rho_{\overline{\mathscr S}K})$ as required to make inclusion $H\to G$ a quasi-isometry of Polish groups.
\end{proof}

\begin{thm}\label{thm: QI of group of bijections}
Suppose $H$ is a Polish group and $G\leq\Homeo(H)$ is a subgroup such that $\Lambda_H\leq G$.  Let $K_G\leq G$ be the isotropy subgroup from Notation \ref{notation}.  Suppose $G$ is equipped with a Polish group topology which satisfies the assumptions of Corollary \ref{coro: conditions on Polish group of homeomorphisms}.  Also suppose $H$ is generated by a subset $S\subset H$ which is coarsely bounded in $H$, $K_G$ is a coarsely bounded group when equipped with the subspace topology, $K_G(S)\subset S$, and $K_G\left(S^{-1}\right)\subset S^{-1}$.  Then $H\to G,h\mapsto\lambda_h$ is a quasi-isometry of Polish groups.
\end{thm}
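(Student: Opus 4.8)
The plan is to deduce this from Theorem~\ref{thm: QI to factor subgroup} applied to the Zappa-Sz\'ep decomposition of $G$ as $\Lambda_H\circ K_G$ recorded after Notation~\ref{notation}. First I would check the structural hypotheses of that theorem. The topology on $G$ satisfies the assumptions of Corollary~\ref{coro: conditions on Polish group of homeomorphisms}, so $K_G$ is closed and $h\mapsto\lambda_h$ is a homeomorphism of $H$ onto the subgroup $\Lambda_H$; since $H$ is Polish this makes $\Lambda_H$ a Polish, hence closed, subgroup of $G$. Thus $G$ is a Polish group which is a Zappa-Sz\'ep product of the closed subgroups $\Lambda_H$ and $K_G$. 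Next, $h\mapsto\lambda_h$ is a topological group isomorphism $H\to\Lambda_H$, so it carries the generating set $S$ of $H$ to a generating set $\Lambda_S=\{\lambda_h\mid h\in S\}$ of $\Lambda_H$; because a topological group isomorphism pulls back each compatible left-invariant metric to a compatible left-invariant metric, coarse boundedness is preserved, so $\Lambda_S$ is coarsely bounded in $\Lambda_H$. Combined with the hypothesis that $K_G$ is a coarsely bounded group in its subspace topology, the only remaining hypothesis of Theorem~\ref{thm: QI to factor subgroup} left to verify is the commutation $\Lambda_S K_G=K_G\Lambda_S$.

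Verifying that commutation is the heart of the matter, and it is exactly where $K_G(S)\subset S$ and $K_G(S^{-1})\subset S^{-1}$ are used. For $K_G\Lambda_S\subset\Lambda_S K_G$ I would take $k\in K_G$, $h\in S$ and write
\[k\circ\lambda_h=\lambda_{k(h)}\circ\bigl(\lambda_{k(h)}^{-1}\circ k\circ\lambda_h\bigr),\]
noting that $k(h)\in S$ by $K_G(S)\subset S$, while the parenthesized composition lies in $G$ and fixes $1_H$, hence lies in $K_G$. For the reverse inclusion, given $h\in S$ and $k\in K_G$ I would put $h'=\bigl(k^{-1}(h^{-1})\bigr)^{-1}$; since $h^{-1}\in S^{-1}$ and $k^{-1}\in K_G$ we have $k^{-1}(h^{-1})\in S^{-1}$ by $K_G(S^{-1})\subset S^{-1}$, so $h'\in S$, and then $k':=\lambda_h\circ k\circ\lambda_{h'}^{-1}$ lies in $G$ and satisfies $k'(1_H)=h\cdot k\bigl(k^{-1}(h^{-1})\bigr)=1_H$, so $k'\in K_G$ and $\lambda_h\circ k=k'\circ\lambda_{h'}\in K_G\Lambda_S$. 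Hence $\Lambda_S K_G=K_G\Lambda_S$.

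With every hypothesis in place, Theorem~\ref{thm: QI to factor subgroup}, applied with $\Lambda_H$ and $K_G$ in the roles of $H$ and $K$, gives that the inclusion $\Lambda_H\to G$ is a quasi-isometry of Polish groups. Composing with the topological group isomorphism $H\to\Lambda_H$---which is itself a quasi-isometry of Polish groups, since an isomorphism of Polish groups sends a closed coarsely bounded generating set to one of the same kind and is an isometry of the associated word metrics---yields that $H\to G$, $h\mapsto\lambda_h$ is a quasi-isometry of Polish groups.

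I expect the main obstacle to be the set-product identity $\Lambda_S K_G=K_G\Lambda_S$: one has to guess the correct ``twisted'' representatives $\lambda_{k(h)}$ and $\lambda_{h'}$ and then check simultaneously that the displaced copy of $k$ still fixes $1_H$ (so remains in the isotropy subgroup $K_G$) and that the displacement lands back in $S$, respectively $S^{-1}$. The surrounding steps are routine verifications that closedness, coarse boundedness, and quasi-isometry type transfer correctly across the isomorphism $H\cong\Lambda_H$ and the Zappa-Sz\'ep decomposition $G=\Lambda_H\circ K_G$.
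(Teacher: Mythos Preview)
Your proposal is correct and follows essentially the same route as the paper: establish $\Lambda_S K_G=K_G\Lambda_S$ via the identities $k\circ\lambda_h=\lambda_{k(h)}\circ(\lambda_{k(h)}^{-1}\circ k\circ\lambda_h)$ and $\lambda_h\circ k=(\lambda_h\circ k\circ\lambda_{k^{-1}(h^{-1})})\circ\lambda_{k^{-1}(h^{-1})}^{-1}$ (your $h'$ is exactly $(k^{-1}(h^{-1}))^{-1}$), then invoke Theorem~\ref{thm: QI to factor subgroup} and compose with the Polish group isomorphism $H\to\Lambda_H$. If anything, you are a bit more careful than the paper in spelling out why $\Lambda_H$ is closed and why $\Lambda_S$ is coarsely bounded in $\Lambda_H$.
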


\begin{proof}
First note that for all $h\in H$ and $k\in K_G$
\[k\circ \lambda_h=\lambda_{k(h)}\circ\left(\lambda_{k(h)}^{-1}\circ k\circ \lambda_h\right)\]
and
\[\lambda_h\circ k=\left(\lambda_h\circ k\circ\lambda_{k^{-1}(h^{-1})}\right)\circ\lambda_{k^{-1}(h^{-1})}^{-1}\]
and in both equations the composition in parentheses is an element of $K_G$.

Set $\Lambda_S=\{\lambda_s\vert s\in S\}$.  By assumption $k(s)\in S$ and $k^{-1}(s^{-1})\in S^{-1}$ for every $s\in S$ and $k\in K_G$, so the first equation above implies $K_G\circ\Lambda_S\subset\Lambda_S\circ K_G$ and the second equation implies $\Lambda_S\circ K_G\subset K_G\circ\Lambda_S$, and thus $\Lambda_S\circ K_G=K_G\circ \Lambda_S$.

By Theorem \ref{thm: QI to factor subgroup} with $\Lambda_S$ in place of $S$ it follows that inclusion $\Lambda_H\to G$ is a quasi-isometry of Polish groups.  As $H\to\Lambda_H,h\mapsto\lambda_h$ is an isomorphism of Polish groups it is a quasi-isometry of Polish groups, and so Theorem \ref{thm: QI of group of bijections} follows by composing the quasi-isometry $H\to\Lambda_H$ with the quasi-isometry $\Lambda_H\to G$.
\end{proof}

We make note of a few examples of Theorems \ref{thm: QI to factor subgroup} and \ref{thm: QI of group of bijections}.  First, suppose $G$ is a Polish group which is a semidirect product of closed subgroups $H$ and $K$ with $K$ normal, so $SK=KS$ for every subset $S\subset G$.  If $H$ is generated by a subset which is coarsely bounded in $H$ and $K$ is a coarsely bounded group when equipped with the subspace topology, then Theorem \ref{thm: QI to factor subgroup} says inclusion $H\to G$ is a quasi-isometry of Polish groups.  The Zappa-Sz\'ep products of Example \ref{ex: Isom(H,d)} need not be semidirect products.

\begin{example}\label{ex: Isom(H,d)}
Let $H$ be a Polish group which admits a complete, compatible, left-invariant metric $d$ whose closed unit ball generates $H$, and let $G$ be the Polish group obtained by equipping the isometry group $\Isom(H,d)$ with the topology of pointwise convergence.  In this case $H\to\Lambda_H,h\mapsto\lambda_h$ is a homeomorphism onto the subspace $\Lambda_H$ and $K_G$ is closed, so the conditions of Corollary \ref{coro: conditions on Polish group of homeomorphisms} are satisfied.  Take $S$ to be the closed unit ball in $(H,d)$.  If $S$ is coarsely bounded in $H$ and $K_G$ is a coarsely bounded group with the subspace topology, then Theorem \ref{thm: QI of group of bijections} says $H\to G,h\mapsto\lambda_h$ is a quasi-isometry of Polish groups.  The condition relating $S$ and $K_G$ in the theorem is satisfied trivially:  For every $h\in H$ and $k\in K_G$
\[d(k(h),1_H)=d(k(h),k\left(1_H\right))=d(h,1_H)\]
which implies $K_G(S)\subset S$, and since $S=S^{-1}$ this also says $K_G(S^{-1})\subset S^{-1}$.
\end{example}

If $H$ is locally compact then the isotropy subgroup of $\Isom(H,d)$ is compact and thus is a coarsely bounded group.  So for instance Example \ref{ex: Isom(H,d)} describes the situation when $H$ is a finitely generated group and $d=\rho_S$ is the word metric with respect to a finite generating set $S$.

For $f\in\Homeo_\Z(\R)$ we let $f\vert_I$ denote the restriction of $f$ to a homeomorphism $I\to f(I)$ and for $f\in\Homeo_+(I)$ we let $\hat f$ denote the homeomorphism $\R\to\R$ which is defined by
\[\hat f(x)=f(x-n)+n\]
for all $n\in\Z$ and all $x\in[n,n+1]$.  It is straightforward to check that $k\mapsto k\vert_I$ defines an isomorphism of topological groups from the isotropy subgroup of $0$ in $\Homeo_\Z(\R)$ to $\Homeo_+(I)$ with inverse isomorphism given by $k\mapsto \hat k$.  See Figure \ref{fig: homeos fixing 0}.

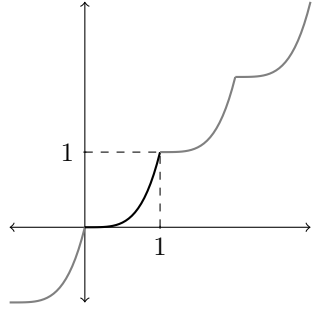
\begin{figure}[ht]
\centering
\begin{tikzpicture}[xscale=1.0,yscale=1.0]
\draw [<->] (-1.0,0.0) -- (0,0) -- (0.0,-1.0);
\draw [<->] (0,3.0) -- (0,0) -- (3.0,0);
\draw (-0.02,1.0) --(0.02,1.0);
\node [left] at (-0.02,1) {1};
\draw (1.0,0.02) --(1.0,-0.02);
\node [below] at (1,-0.02) {1};
\draw [black, dashed] (0.0,1.0) --(1.0,1.0);
\draw [black, dashed] (1.0,0.0) --(1.0,1.0);
\draw[gray, thick, domain=-1:0] plot (\x, {(\x-floor(\x))^4+floor(\x)});
\draw[black, thick, domain=0:1] plot (\x, {\x^4});
\draw[gray, thick, domain=1:2] plot (\x, {(\x-floor(\x))^4+floor(\x)});
\draw[gray, thick, domain=2:3] plot (\x, {(\x-floor(\x))^4+floor(\x)});
\end{tikzpicture}
\caption{The isotropy subgroup of $0$ in $\Homeo_\Z(\R)$ is isomorphic to $\Homeo_+(I)$.  The graph of $f:I\to I,x\mapsto x^4$ is shown in black and the graph of $\hat f:\R\to\R$ is shown in gray.}
\label{fig: homeos fixing 0}
\end{figure}

\begin{example}\label{ex: Homeo_Z(R)}
We apply Theorem \ref{thm: QI of group of bijections} with $H=\R$, $G=\Homeo_\Z(\R)$, and $S=[-1,1]$.  It is straightforward to verify the topological assumptions of Corollary \ref{coro: conditions on Polish group of homeomorphisms}.  The subset $S$ generates $H$ and because it is compact it is also coarsely bounded.  By \cite[Lemma 8]{MannRose} the Polish group $\Homeo_\partial\left(\mathbb B^n\right)$ of homeomorphisms of the compact ball of dimension $n$ which fix the boundary is a coarsely bounded group for any integer $n>0$.  As noted above $K_G$ and $\Homeo_+(I)$ are isomorphic groups and because $\Homeo_+(I)$ and $\Homeo_\partial\left(\mathbb B^1\right)$ are isomorphic it follows that $K_G$ is a coarsely bounded group as well.  For every $s\in S$ and $k\in K_G$ the Intermediate Value Theorem gives $k(s)\in S$ so $K_G(S)\subset S$, and $S=S^{-1}$ so this also says $K_G(S^{-1})\subset S^{-1}$.  Applying Theorem \ref{thm: QI of group of bijections} shows $\R\to\Homeo_\Z(\R),r\mapsto\tau_r$ is a quasi-isometry of Polish groups.
\end{example}

\section{$\AC_+\left(\mathbb S^1\right)$ and $\AC_\Z^{\loc}(\R)$}\label{sect: AC(S) and AC_Z(R)}

In this final section we study the Zappa-Sz\'ep products $\AC_+\left(\mathbb S^1\right)$ and $\AC_\Z^{\loc}(\R)$.  In particular, we verify the claim from Section \ref{sect: prelims} that for $G=\AC_+\left(\mathbb S^1\right)$ and $G=\AC_\Z^{\loc}(\R)$ the right-invariant metric $d_\infty+d_*$ on $G$ is compatible with a Polish group topology.  Then we use Theorem \ref{thm: QI of group of bijections} to describe the quasi-isometry types of these Polish groups.

We set
\[K_*=\left\{k\in\AC_\Z^\loc(\R)\vert k(0)=0\right\}\]
and note that $k\mapsto k\vert_I$ defines an isomorphism of groups $K_*\to\AC_+(I)$ which preserves $d_*$.  As $d_*$ is a right-invariant metric on $\AC_+(I)$ which induces a Polish group topology it follows that $d_*$ is also a right-invariant metric on $K_*$ which induces a Polish group topology, and the Polish groups $K_*$ and $\AC_+(I)$ are isomorphic.

\begin{lemma}\label{lem: eval is continuous}
Evaluation
\[K_*\times\R\to\R,(k,r)\mapsto k(r)\]
is continuous.
\end{lemma}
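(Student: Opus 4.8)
The plan is to reduce joint continuity to two facts: a uniform comparison between pointwise evaluation and the metric $d_*$ on $K_*$, and the continuity of each individual element of $K_*$ (which is automatic, since elements of $K_*$ are homeomorphisms of $\R$).

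The first and only substantive step is to show that $|k(r)-l(r)|\le d_*(k,l)$ for all $k,l\in K_*$ and \emph{all} $r\in\R$ --- that is, that $d_\infty\le d_*$ on $K_*$, in exact analogy with Lemma \ref{lemma: inclusion is continuous}. To prove it I would write $r=n+s$ with $n\in\Z$ and $s\in[0,1)$; because $k$ and $l$ commute with integer translations this gives $k(r)-l(r)=k(s)-l(s)$, and because $k(0)=l(0)=0$ the Fundamental Theorem of Lebesgue Integration rewrites $k(s)-l(s)=\int_0^s\bigl(k'(t)-l'(t)\bigr)\ dt$, whose absolute value is at most $\int_0^1|k'(t)-l'(t)|\ dt=d_*(k,l)$ since the integrand is nonnegative and $s\le 1$. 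The one point to watch is that this bound must hold for every real $r$, not merely for $r\in[0,1]$; that is exactly what the $1$--periodicity of $k'-l'$ from Proposition \ref{prop: d_*} secures.

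Given this estimate, the remainder is a standard $\epsilon/2$ argument. To check continuity at a point $(k,r)$, for $(l,t)$ near $(k,r)$ I would split
\[
|l(t)-k(r)|\le|l(t)-k(t)|+|k(t)-k(r)|\le d_*(l,k)+|k(t)-k(r)|,
\]
bound the first term by requiring $d_*(l,k)$ small and the second by continuity of the fixed homeomorphism $k$ at $r$, and combine. I do not anticipate any real obstacle; essentially all of the content sits in the uniform estimate of the previous paragraph, and that estimate is itself a direct consequence of the Fundamental Theorem of Lebesgue Integration together with Proposition \ref{prop: d_*}.
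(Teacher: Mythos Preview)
Your argument is correct. The uniform estimate $|k(r)-l(r)|\le d_*(k,l)$ for all $r\in\R$ is exactly the statement $d_\infty\le d_*$ on $K_*$, and your derivation via $\Z$--equivariance and the Fundamental Theorem is sound; the subsequent $\epsilon/2$ splitting then gives joint continuity immediately.

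The paper's proof reaches the same conclusion by a more structural route: rather than proving the $\epsilon/2$ estimate by hand, it observes that the inequality $d_\infty\le d_*$ (packaged as Lemma~\ref{lemma: inclusion is continuous}) makes the inclusion $K_*\hookrightarrow K_\infty=\{k\in\Homeo_\Z(\R)\mid k(0)=0\}$ continuous, and then appeals to the fact that evaluation $K_\infty\times\R\to\R$ is already continuous for the compact-open topology (extracted from Proposition~\ref{prop: product topology is group} applied to $\Homeo_\Z(\R)$). Your approach is more self-contained and avoids invoking Proposition~\ref{prop: product topology is group} in the reverse direction; the paper's approach is more modular, reusing Lemma~\ref{lemma: inclusion is continuous} verbatim and deferring the ``evaluation is jointly continuous for uniform convergence'' step to the ambient homeomorphism group. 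Substantively the two arguments share the same core inequality; they differ only in whether the passage from that inequality to joint continuity is done directly or by factoring through $\Homeo_\Z(\R)$.
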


\begin{proof}
Let
\[K_\infty=\left\{k\in\Homeo_\Z(\R)\vert k(0)=0\right\},\]
let
\[\Phi:K_*\times H\to K_\infty\times H\]
be inclusion, and let
\[\Psi:K_\infty\times H\to H\]
be evaluation.  By Lemma \ref{lemma: inclusion is continuous} inclusion $\AC_+(I)\to\Homeo_+(I)$ is continuous so also $\Phi$ is continuous, and by Proposition \ref{prop: product topology is group} (with $H=\R$ and $G=\Homeo_\Z(\R)$) $\Psi$ is continuous.  Evaluation $K_*\times H\to H$ is the composition $\Psi\circ\Phi$, and so it is continuous.
\end{proof}

\begin{lemma}\label{lem: continuous on product}
The function
\[K_*\times \R\to K_*,(k,r)\mapsto\tau_{k(r)}^{-1}\circ k\circ\tau_r\]
is continuous.
\end{lemma}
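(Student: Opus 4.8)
The plan is to verify sequential continuity at an arbitrary point $(k_0,r_0)\in K_*\times\R$; this suffices since all spaces in sight are metrizable. So fix $(k_0,r_0)$, let $(k_n,r_n)\to(k_0,r_0)$ in $K_*\times\R$, and abbreviate $\psi(k,r)=\tau_{k(r)}^{-1}\circ k\circ\tau_r$. One first checks that $\psi$ actually takes values in $K_*$: since $\tau_r(0)=r$ we get $\psi(k,r)(0)=\tau_{k(r)}^{-1}(k(r))=0$, and $\psi(k,r)$ lies in $\AC_\Z^\loc(\R)$ because it is a composition of elements of that group (each $\tau_s$ commutes with integer translations and is bi-locally-absolutely-continuous). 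The goal is then $d_*(\psi(k_n,r_n),\psi(k_0,r_0))\to 0$.

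The key first move is to absorb the outer translations using the invariance property of Proposition \ref{prop: d_*}. Writing $\tau_{k(r)}^{-1}=\tau_{-k(r)}$ and applying that proposition with $f=k_n\circ\tau_{r_n}$, $g=k_0\circ\tau_{r_0}$, and real parameters $-k_n(r_n),-k_0(r_0)$, one gets
\[
d_*(\psi(k_n,r_n),\psi(k_0,r_0))=d_*(k_n\circ\tau_{r_n},\,k_0\circ\tau_{r_0}).
\]
Since $(k\circ\tau_r)'(t)=k'(t+r)$ almost everywhere, the right-hand side equals $\int_0^1|k_n'(t+r_n)-k_0'(t+r_0)|\,dt$, which the triangle inequality bounds by
\[
\int_0^1|k_n'(t+r_n)-k_0'(t+r_n)|\,dt+\int_0^1|k_0'(t+r_n)-k_0'(t+r_0)|\,dt.
\]

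Next I would estimate the two pieces. Recall from the proof of Proposition \ref{prop: d_*} that derivatives of elements of $\AC_\Z^\loc(\R)$ are $1$-periodic and that $d_*$ is computed by integrating $|f'-g'|$ over any unit interval, so the substitution $s=t+r_n$ identifies the first integral with $\int_{r_n}^{r_n+1}|k_n'(s)-k_0'(s)|\,ds=d_*(k_n,k_0)$, which tends to $0$ because $k_n\to k_0$ in $(K_*,d_*)$. For the second, the substitution $s=t+r_0$ together with $1$-periodicity of the integrand turns it into $\int_0^1|k_0'(s+\delta_n)-k_0'(s)|\,ds$ with $\delta_n=r_n-r_0\to 0$, and here lies the only genuine point: one invokes continuity of translation in $L^1$. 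Since $k_0'$ is merely locally integrable it should first be replaced by the function $g$ agreeing with $k_0'$ on $[-1,2]$ and vanishing elsewhere, which belongs to $L^1(\R)$; for $|\delta_n|<1$ and $s\in[0,1]$ one has $g(s+\delta_n)=k_0'(s+\delta_n)$ and $g(s)=k_0'(s)$, so the integral is at most $\|g(\cdot+\delta_n)-g\|_{L^1(\R)}\to 0$. Combining the two estimates gives $d_*(\psi(k_n,r_n),\psi(k_0,r_0))\le d_*(k_n,k_0)+\|g(\cdot+\delta_n)-g\|_{L^1(\R)}$, whose right-hand side tends to $0$, finishing the proof.

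The only real obstacle is thus the appeal to $L^1$-continuity of translation — routine, but needing the truncation above because $k_0'$ is not globally integrable; everything else is bookkeeping with the periodicity and substitution facts already recorded in Section \ref{sect: prelims}. Finally, I would note that this lemma together with Lemma \ref{lem: eval is continuous} supplies exactly the two continuity hypotheses of Proposition \ref{prop: product topology is group} for $H=\R$, $G=\AC_\Z^\loc(\R)$, $K_G=K_*$, which is how it gets used downstream.
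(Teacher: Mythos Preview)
Your proof is correct and follows essentially the same route as the paper: both absorb the outer translations via Proposition~\ref{prop: d_*} to reduce to estimating $d_*(k\circ\tau_r,\,l\circ\tau_s)$, then split with the triangle inequality into a $d_*(k,l)$ term and a translation-in-$L^1$ term. The only cosmetic difference is that you invoke continuity of translation in $L^1$ as a known fact (with your truncation handling the local-integrability issue), whereas the paper proves that step inline by approximating $k'$ with a continuous $\gamma$ and using uniform continuity---the standard proof of the same lemma.
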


\begin{proof}
For all $r\in\R$ and $k\in K_*$ let $\psi(k,r)=\tau_{k(r)}^{-1}\circ k\circ\tau_r$.  Now fix $(k,r)\in K_*\times\R$ and let $\epsilon>0$ be given.  For any compact interval $J$ the collection of continuous functions $\mathcal C(J)$ is a dense subset of $L^1(J)$, so there exists some continuous function $\gamma:[-\epsilon,1+\epsilon]\to \R$ with
\[\int_{-\epsilon}^{1+\epsilon}|k'(t)-\gamma(t)|\ dt<\frac{\epsilon}{4}\]
and by uniform continuity of $\gamma$ there exists some $\delta>0$ so that
\[|\gamma(x)-\gamma(y)|<\frac{\epsilon}{4}\]
for all $x,y\in \R$ with $|x-y|<\delta$.  Using the properties of $d_*$ from Proposition \ref{prop: d_*}, for all $s\in\R$ and $l\in K_*$
\begin{align*}
d_*\left(\psi(k,r),\psi(l,s)\right)=d_*(k,l\circ\tau_{s-r})
\end{align*}
and if $d_*(k,l)+|r-s|<\min\{\delta,\epsilon/4\}$ then
\begin{align*}
d_*(k,l\circ\tau_{s-r})&\leq \int_0^1|k'(t)-\gamma(t)|\ dt\\
&\hspace{.5cm}+\int_0^1|\gamma(t)-\gamma(t+s-r)|\ dt\\
&\hspace{.5cm}+\int_0^1|\gamma(t+s-r)-k'(t+s-r)|\ dt\\
&\hspace{.5cm}+\int_0^1|k'(t+s-r)-l'(t+s-r)|\ dt\\
&<\epsilon
\end{align*}
so $\psi$ is continuous at $(k,r)$.  Because the argument given works for an arbitrary point $(k,r)\in K_*\times\R$ it follows that $\psi$ is continuous on $K_*\times\R$.
\end{proof}

Now we extend the topology on $K_*$ to a Polish group topology on $\AC_\Z^\loc(\R)$.

\begin{prop}\label{prop: line group topology}
Set $H=\R$ and $G=\AC_\Z^\loc(\R)$ and let $\Omega:H\times K_G\to G$ be the bijection from Notation \ref{notation}.  Then the unique topology on $G$ which makes $\Omega$ a homeomorphism also makes $G$ a Polish group, and the metric $d_\infty+d_*$ is compatible with this topology.
\end{prop}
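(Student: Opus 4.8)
The plan is to read off the topology on $G=\AC_\Z^\loc(\R)$ from Proposition \ref{prop: product topology is group}, and then to identify $d_\infty+d_*$, up to bi-Lipschitz equivalence, with the metric that proposition produces.

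Concretely, I would apply Proposition \ref{prop: product topology is group} with the topological group $H=\R$, the subgroup $G=\AC_\Z^\loc(\R)\leq\Homeo(\R)$ --- noting $\Lambda_\R=\{\tau_r:r\in\R\}\leq G$ because every real translation is smooth and commutes with integer translations --- and with $K_G=K_*$ carrying the Polish group topology transported from $\AC_+(I)$ via $k\mapsto k|_I$, equivalently the topology of the metric $d_*$. The proposition then supplies the unique topology on $G$ making $\Omega$ a homeomorphism and reduces the assertion that $G$ is a topological group to the continuity of $K_*\times\R\to\R$, $(k,r)\mapsto k(r)$ and of $K_*\times\R\to K_*$, $(k,r)\mapsto\tau_{k(r)}^{-1}\circ k\circ\tau_r$; these are exactly Lemmas \ref{lem: eval is continuous} and \ref{lem: continuous on product}. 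Since via $\Omega$ the space $G$ is homeomorphic to the product $\R\times K_*$ of Polish spaces, $G$ is a Polish space, and hence a Polish group.

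For the metric statement I would feed $d_H(x,y)=|x-y|$ on $\R$ and $d_K=d_*$ on $K_*$ into the last clause of Proposition \ref{prop: product topology is group}, obtaining the compatible metric
\[d(f,g)=|f(0)-g(0)|+d_*\!\left(\tau_{f(0)}^{-1}\circ f,\ \tau_{g(0)}^{-1}\circ g\right)\]
on $G$. Since $(\tau_r\circ h)'=h'$ for all $r\in\R$, the second term equals $d_*(f,g)$, so $d(f,g)=|f(0)-g(0)|+d_*(f,g)$. It then remains to compare $d$ with $d_\infty+d_*$. On one hand $|f(0)-g(0)|\leq d_\infty(f,g)$, so $d\leq d_\infty+d_*$. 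On the other hand, each $f\in G$ restricts on $I$ to a continuous nondecreasing function, so the Fundamental Theorem of Lebesgue Integration gives $f(x)-g(x)=(f(0)-g(0))+\int_0^x(f'-g')\,\dt$ for $x\in I$, whence $|f(x)-g(x)|\leq|f(0)-g(0)|+d_*(f,g)=d(f,g)$; taking the supremum over $x\in I$ yields $d_\infty\leq d$, and combined with $d_*\leq d$ this gives $d_\infty+d_*\leq 2d$. Thus $d$ and $d_\infty+d_*$ are bi-Lipschitz equivalent, so $d_\infty+d_*$ is compatible with the topology on $G$.

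I do not expect a genuine obstacle here: the substantive continuity facts have been isolated in Lemmas \ref{lem: eval is continuous} and \ref{lem: continuous on product} and in Proposition \ref{prop: product topology is group}, so what remains is verifying the hypotheses of that proposition and an elementary estimate. The only mildly delicate point is the passage from $d_*$-control of derivatives to uniform ($d_\infty$) control of the homeomorphisms themselves, which is precisely where the Fundamental Theorem of Lebesgue Integration enters.
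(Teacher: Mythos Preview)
Your proposal is correct and follows essentially the same route as the paper: apply Proposition \ref{prop: product topology is group} using Lemmas \ref{lem: eval is continuous} and \ref{lem: continuous on product} for the two continuity checks, obtain the compatible metric $d(f,g)=|f(0)-g(0)|+d_*(f,g)$, and then sandwich $d\le d_\infty+d_*\le 2d$. The only cosmetic difference is that for $d_\infty\le d$ the paper first splits off $|f(0)-g(0)|$ by the triangle inequality and then invokes $d_\infty\le d_*$ on $K_*$ (the content of Lemma \ref{lemma: inclusion is continuous}), whereas you unwind that lemma and appeal to the Fundamental Theorem directly; the arguments are the same in substance.
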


\begin{proof}
In the present notation $K_*=K_G$.  Evaluation $K_G\times H\to H$ and the function
\[K_G\times H\to K_G,(k,h)\mapsto\lambda_{k(h)}^{-1}\circ k\circ\lambda_h\]
are continuous by Lemmas \ref{lem: eval is continuous} and \ref{lem: continuous on product}.  Now Proposition \ref{prop: product topology is group} applies and so the topology that makes $\Omega$ a homeomorphism also makes $G$ a Polish group.  A compatible metric $d$ on $G$ is given by
\[d(f,g)=|f(0)-g(0)|+d_*\left(\tau_{f(0)}^{-1}\circ f,\tau_{g(0)}^{-1}\circ g\right)=|f(0)-g(0)|+d_*(f,g)\]
for all $f,g\in G$.

For all $f,g\in G$
\[d(f,g)=|f(0)-g(0)|+d_*(f,g)\leq d_\infty(f,g)+d_*(f,g)\]
and
\begin{align*}
d_\infty(f,g)\leq&|f(0)-g(0)|+d_\infty\left(\tau_{f(0)}^{-1}\circ f,\tau_{g(0)}^{-1}\circ g\right)\\
\leq&|f(0)-g(0)|+d_*\left(\tau_{f(0)}^{-1}\circ f,\tau_{g(0)}^{-1}\circ g\right)\\
=&|f(0)-g(0)|+d_*(f,g)
\end{align*}
so $d_\infty(f,g)+d_*(f,g)\leq 2\ d(f,g)$.  This implies $d$ and $d_\infty+d_*$ induce the same topology on $G$.
\end{proof}

As an aside we note that the topology induced by the pseudometric $d_*$ on $\AC_\Z^\loc(\R)$ is not a group topology.  To see this, let
\[\mathcal T=\{\tau_r\vert r\in\R\}\]
be the subgroup of $\AC_\Z^\loc(\R)$ consisting of real translations and for $k\in K_*$ consider the cosets $\mathcal T\circ k$ and $k\circ \mathcal T$.  By Proposition \ref{prop: d_*} the right coset $\mathcal T\circ k$ has $d_*$--diameter $0$.  On the other hand, if $k\circ \mathcal T$ has $d_*$--diameter $0$ then the Fundamental Theorem implies the homeomorphism $k$ is also a homomorphism of $(\R,+)$, and so $k$ must be the identity $\R\to\R$.  This says $k\circ\mathcal T$ has $d_*$--diameter $0$ if and only if $k$ is the identity.  From this it follows that inversion in $\AC_\Z^\loc(\R)$ exchanges subsets with $d_*$--diameter $0$ and subsets with positive $d_*$--diameter, and so inversion is not a homeomorphism with the topology induced by $d_*$.

We set
\[K_\circ=\left\{k\in\AC_+\left(\mathbb S^1\right)\vert k(1)=1\right\}\]
and note that $k\mapsto \tilde k\vert_I$ defines an isomorphism of groups $K_\circ\to \AC_+(I)$ which preserves $d_*$.  It follows that $d_*$ is a right-invariant metric on $K_\circ$ which induces a Polish group topology, and the Polish groups $K_\circ$ and $\AC_+(I)$ are isomorphic.  We extend the topology on $K_\circ$ to a Polish group topology on $\AC_+\left(\mathbb S^1\right)$.  Alternatively, one may define the same Polish group topology on $\AC_+\left(\mathbb S^1\right)$ by identifying this group with the quotient of $\AC_\Z^\loc(\R)$ by the closed normal subgroup consisting of integer translations.

\begin{prop}\label{prop: circle group topology}
Set $H=\mathbb S^1$ and $G=\AC_+\left(\mathbb S^1\right)$ and let $\Omega:H\times K_G\to G$ be the bijection from Notation \ref{notation}.  Then the unique topology on $G$ which makes $\Omega$ a homeomorphism also makes $G$ a Polish group, and the metric $d_\infty+d_*$ is compatible with this topology.
\end{prop}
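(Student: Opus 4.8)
The plan is to mimic the structure of the proof of Proposition \ref{prop: line group topology}, using Proposition \ref{prop: product topology is group} with $H=\mathbb S^1$ and $K_G=K_\circ$. Concretely, I would first observe that in the present notation $K_G$ is exactly the subgroup $K_\circ$ defined just before the statement, which we have already identified with the Polish group $\AC_+(I)$ via $k\mapsto\tilde k|_I$, so $K_G$ carries a Polish group topology. To invoke Proposition \ref{prop: product topology is group} it then suffices to check that the two structure maps $\phi\colon K_\circ\times\mathbb S^1\to\mathbb S^1$, $(k,x)\mapsto k(x)$, and $\psi\colon K_\circ\times\mathbb S^1\to K_\circ$, $(k,x)\mapsto\lambda_{k(x)}^{-1}\circ k\circ\lambda_x$, are continuous; here $\lambda_x$ denotes rotation by $x$ on $\mathbb S^1$. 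Once these are established, Proposition \ref{prop: product topology is group} gives that the unique topology making $\Omega$ a homeomorphism is a (necessarily Polish, being carried by the Polish space $\mathbb S^1\times K_\circ$) group topology, and it supplies the compatible metric $d(f,g)=d_{\mathbb S^1}(f(1),g(1))+d_*(\lambda_{f(1)}^{-1}\circ f,\lambda_{g(1)}^{-1}\circ g)$; the final paragraph of the proof of Proposition \ref{prop: line group topology} then transfers almost verbatim to show $d$ and $d_\infty+d_*$ induce the same topology, using $d_{\mathbb S^1}\leq d_\infty$ and the inequality $d_\infty\leq d_*$ on $\AC_+(I)$ (Lemma \ref{lemma: inclusion is continuous}) lifted through the covering.

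The continuity of $\phi$ and $\psi$ I would obtain by pushing forward the corresponding facts for $\AC_\Z^\loc(\R)$ through the covering map $\pi\colon\R\to\mathbb S^1$. The key point is that $K_\circ$ and $K_*$ are isomorphic as Polish groups: the isomorphism $k\mapsto\tilde k|_I\colon K_\circ\to\AC_+(I)$ and the isomorphism $k\mapsto k|_I\colon K_*\to\AC_+(I)$ compose to an isomorphism $K_\circ\to K_*$, $k\mapsto\tilde k$, where $\tilde k$ is the canonical lift. Moreover $\pi$ is a covering and a local isometry, so evaluation on $\mathbb S^1$ is the composite of evaluation on $\R$ (which is continuous $K_*\times\R\to\R$ by Lemma \ref{lem: eval is continuous}) with $\pi$ in both arguments; this gives continuity of $\phi$. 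Similarly, for $\psi$ one checks that under the identifications the map $(k,x)\mapsto\lambda_{k(x)}^{-1}\circ k\circ\lambda_x$ on the circle corresponds to $(k,r)\mapsto\tau_{\tilde k(r)}^{-1}\circ\tilde k\circ\tau_r$ on the line, which is continuous by Lemma \ref{lem: continuous on product}; the only subtlety is bookkeeping the base point $1\in\mathbb S^1$ versus $0\in\R$, i.e. that $\widetilde{\lambda_{k(x)}^{-1}\circ k\circ\lambda_x}$ is indeed $\tau_{\tilde k(r)}^{-1}\circ\tilde k\circ\tau_r$ when $\pi(r)=x$ and one normalizes the lift to fix $0$.

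The main obstacle, and the only genuinely new ingredient beyond the $\R$ case, is handling the lift/projection dictionary cleanly — verifying that the canonical-lift correspondence $K_\circ\to K_*$ is simultaneously a group isomorphism and a homeomorphism, and that it intertwines the circle structure maps $\phi,\psi$ with the line structure maps already shown continuous. This is where one must be careful that the normalization $\tilde f(0)\in[0,1)$ used to define the circle topology is compatible with the composition $\tilde k\circ\tau_r$ possibly moving $0$ outside $[0,1)$; one resolves this by noting that post-composing with the appropriate integer translation (which is continuous and an isometry for $d_*$) restores the normalization, and that $d_*$ is insensitive to such adjustments by Proposition \ref{prop: d_*}. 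Everything else — Polishness, the metric computation, the equivalence of $d$ and $d_\infty+d_*$ — is then a routine repetition of the argument in Proposition \ref{prop: line group topology}, so I would state it briefly rather than reproduce it.
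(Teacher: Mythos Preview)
Your proposal is correct and follows essentially the same route as the paper: apply Proposition \ref{prop: product topology is group} with $H=\mathbb S^1$ and $K_G=K_\circ$, verify continuity of the two structure maps by transporting the already-proved line results through the lift/descent correspondence $K_\circ\cong K_*$, and then repeat the metric comparison from Proposition \ref{prop: line group topology}. The only cosmetic difference is in the evaluation map $\phi$: the paper stays on the circle and factors $K_\circ\times\mathbb S^1\to K_\infty\times\mathbb S^1\to\mathbb S^1$ through the isotropy subgroup $K_\infty\subset\Homeo_+(\mathbb S^1)$ (using Lemma \ref{lemma: inclusion is continuous} and the known group topology on $\Homeo_+(\mathbb S^1)$), whereas you lift to $\R$ and invoke Lemma \ref{lem: eval is continuous}; for $\psi$ the paper simply asserts that the map of Lemma \ref{lem: continuous on product} ``descends to a continuous function $K_*\times\mathbb S^1\to K_*$,'' which is exactly the content of your lift/normalization bookkeeping.
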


\begin{proof}
In the present notation $K_\circ=K_G$.  Let
\[K_\infty=\left\{k\in\Homeo_+\left(\mathbb S^1\right)\vert k(1)=1\right\}\]
let
\[\Phi:K_G\times H\to K_\infty\times H\]
be inclusion, and let
\[\Psi:K_\infty\times H\to H\]
be evaluation.  Lemma \ref{lemma: inclusion is continuous} implies inclusion $K_G\to K_\infty$ is continuous so also $\Phi$ is continuous, and by Proposition \ref{prop: product topology is group} (with $H=\mathbb S$ and $G=\Homeo_+\left(\mathbb S^1\right)$) $\Psi$ is continuous.  Evaluation $K_G\times H\to H$ is the composition $\Psi\circ\Phi$, and so it is continuous.  The function in Lemma \ref{lem: continuous on product} is continuous and descends to a continuous function $K_*\times\mathbb S^1\to K_*$, so
\[K_G\times H\to K_G,(k,r)\mapsto\lambda_{k(r)}^{-1}\circ k\circ\lambda_r\]
is continuous.  By Proposition \ref{prop: product topology is group} the topology that makes $\Omega$ a homeomorphism also makes $G$ a Polish group.  A compatible metric $d$ on $G$ is given by
\[d(f,g)=d_{\mathbb S^1}(f(1),g(1))+d_*\left(f,g\right)\]
for all $f,g\in G$.

The argument that $d$ and $d_\infty+d_*$ induce the same topology on $G$ works similarly as in the proof of Proposition \ref{prop: line group topology}.
\end{proof}

\begin{prop}\label{prop: AC_+(S) is coarsely bounded}
$\AC_+\left(\mathbb S^1\right)$ is a coarsely bounded group.
\end{prop}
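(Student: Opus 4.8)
The plan is to combine the Zappa-Sz\'ep decomposition of $G:=\AC_+(\mathbb S^1)$ coming from Notation \ref{notation} with Proposition \ref{prop: AC is coarsely bounded}. In the notation of Notation \ref{notation} with $H=\mathbb S^1$ we have $K_G=K_\circ$ (this is recorded in the proof of Proposition \ref{prop: circle group topology}), and the remarks following Notation \ref{notation} give the factorization $G=\Lambda_{\mathbb S^1}\circ K_\circ$. So it suffices to show each of $\Lambda_{\mathbb S^1}$ and $K_\circ$ is coarsely bounded in $G$ and then to observe that a Polish group which is a product of two coarsely bounded subsets each containing the identity is coarsely bounded.

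First I would check that $\Lambda_{\mathbb S^1}$ is coarsely bounded in $G$: the map $\mathbb S^1\to G$, $z\mapsto\lambda_z$, is $\Omega$ precomposed with $z\mapsto(z,1_{K_G})$, hence continuous, so $\Lambda_{\mathbb S^1}$ is a compact subset of $G$; and every compatible, left-invariant metric on $G$, being continuous, is bounded on the compact set $\Lambda_{\mathbb S^1}\times\Lambda_{\mathbb S^1}$. Next I would check that $K_\circ$ is coarsely bounded in $G$: with its subspace topology $K_\circ$ is a Polish group isomorphic to $\AC_+(I)$, which is a coarsely bounded group by Proposition \ref{prop: AC is coarsely bounded}; and any compatible, left-invariant metric $d$ on $G$ restricts to a compatible, left-invariant metric on the subgroup $K_\circ$, in which $K_\circ$ is then bounded, so $K_\circ$ has finite $d$-diameter.

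To conclude, given a compatible, left-invariant metric $d$ on $G$, let $D$ bound the $d$-diameters of $\Lambda_{\mathbb S^1}$ and $K_\circ$, both of which contain $1$. Any $g\in G$ factors as $g=\lambda_z\circ k$ with $z\in\mathbb S^1$ and $k\in K_\circ$, and left invariance gives $d(g,1)\leq d(\lambda_z\circ k,\lambda_z)+d(\lambda_z,1)=d(k,1)+d(\lambda_z,1)\leq 2D$, so $G$ has finite $d$-diameter; as $d$ was arbitrary, $G$ is a coarsely bounded group. Alternatively one may invoke Theorem \ref{thm: QI of group of bijections} with $H=\mathbb S^1$ and $S=\mathbb S^1$ — whose hypotheses are trivially met once one has the topological conditions checked in Proposition \ref{prop: circle group topology} — which exhibits $G$ as quasi-isometric to the compact, hence coarsely bounded, group $\mathbb S^1$. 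There is no genuinely hard step here; the only point that requires a moment's care is the identification $K_\circ\cong\AC_+(I)$ of topological groups: the group isomorphism and $d_*$-compatibility are recorded where $K_\circ$ is introduced, but one should also observe, exactly as in Lemma \ref{lemma: inclusion is continuous}, that on $K_\circ$ the metrics $d_\infty+d_*$ and $d_*$ induce the same topology, so that Proposition \ref{prop: AC is coarsely bounded} genuinely applies to the factor $K_\circ$ sitting inside $G$.
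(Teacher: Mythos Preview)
Your proof is correct. The alternative you mention at the end --- invoking Theorem \ref{thm: QI of group of bijections} with $H=S=\mathbb S^1$ --- is exactly the paper's proof. Your primary argument takes a more elementary route: instead of passing through the word-metric and quasi-isometry machinery of Section \ref{sect: ZSP}, you work directly from the definition of coarse boundedness, using only the factorization $G=\Lambda_{\mathbb S^1}\circ K_\circ$, the compactness of $\Lambda_{\mathbb S^1}$, Proposition \ref{prop: AC is coarsely bounded} for the factor $K_\circ$, and the triangle inequality under a left-invariant metric. This is a cleaner, self-contained path for this particular statement. The paper's route has the advantage of uniformity: the same Theorem \ref{thm: QI of group of bijections} is what handles Proposition \ref{prop: AC_+^loc(Z) qi type}, where the factor $H=\R$ is not itself coarsely bounded and one genuinely needs the quasi-isometry conclusion $H\to G$, not merely coarse boundedness of $G$.
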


\begin{proof}
Apply Theorem \ref{thm: QI of group of bijections} with $H=S=\mathbb S^1$ and $G=\AC_+\left(\mathbb S^1\right)$.  By Proposition \ref{prop: circle group topology} the assumptions of Corollary \ref{coro: conditions on Polish group of homeomorphisms} are satisfied.  As $S$ is compact it is coarsely bounded.  The isotropy subgroup $K_G$ is isomorphic to $\AC_+(I)$ and so is a coarsely bounded group.  The condition $K_G(S)\subset S$ and $K_G(S^{-1})\subset S^{-1}$ is satisfied trivially because $S=H$.  By Theorem \ref{thm: QI of group of bijections} $H\to G, h\mapsto\lambda_h$ is a quasi-isometry of Polish groups.  Since $H$ is a coarsely bounded group so is $G$.
\end{proof}

\begin{prop}\label{prop: AC_+^loc(Z) qi type}
$\AC_\Z^\loc(\R)$ is quasi-isometric to $\Z$.
\end{prop}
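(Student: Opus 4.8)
The plan is to apply Theorem \ref{thm: QI of group of bijections} with $H=\R$, $G=\AC_\Z^\loc(\R)$, and $S=[-1,1]$, exactly as in Example \ref{ex: Homeo_Z(R)} but with the absolutely continuous group in place of $\Homeo_\Z(\R)$. First I would note that in the present notation the left translations are $\lambda_r=\tau_r$, so $\Lambda_\R=\{\tau_r\mid r\in\R\}$, and that $K_G=K_*$. By Proposition \ref{prop: line group topology} the Polish group topology on $G$ is the unique one making $\Omega$ a homeomorphism; from this it is routine that $\R\to\Lambda_\R,\ r\mapsto\tau_r$ is a homeomorphism onto the subspace $\Lambda_\R$ and that $K_*=\Omega(\{0\}\times K_*)$ is closed in $G$, so the topological hypotheses of Corollary \ref{coro: conditions on Polish group of homeomorphisms} are satisfied.

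Next I would check the remaining hypotheses of Theorem \ref{thm: QI of group of bijections}. The interval $S=[-1,1]$ generates $\R$ and, being compact, is coarsely bounded in $\R$. The subspace topology that $K_*$ inherits from $G$ coincides with its topology as a Polish group isomorphic to $\AC_+(I)$, so by Proposition \ref{prop: AC is coarsely bounded} the group $K_*$ is coarsely bounded with the subspace topology. Finally, every $k\in K_*$ is an increasing homeomorphism of $\R$ commuting with integer translations, so $k(1)=k(0)+1=1$ and $k(-1)=k(0)-1=-1$, whence $k([-1,1])=[-1,1]$; thus $K_*(S)\subset S$, and since $S=-S=S^{-1}$ also $K_*(S^{-1})\subset S^{-1}$. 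Theorem \ref{thm: QI of group of bijections} now gives that $\R\to\AC_\Z^\loc(\R),\ r\mapsto\tau_r$ is a quasi-isometry of Polish groups.

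It remains to identify the quasi-isometry type of $\R$ as a Polish group. Since $S=[-1,1]$ is a closed, coarsely bounded generating set for $\R$, this type is represented by $(\R,\rho_S)$, and one computes $\rho_S(x,y)=\lceil|x-y|\rceil$, so $|x-y|\le\rho_S(x,y)\le|x-y|+1$ and $(\R,\rho_S)$ is quasi-isometric to $\R$ with its usual metric. The inclusion $\Z\hookrightarrow\R$ is an isometric embedding with $1$-dense image, hence a quasi-isometry of metric spaces, and as $\{-1,0,1\}$ is a finite generating set for $\Z$ this makes $\Z\to\R$ a quasi-isometry of Polish groups. Composing $\Z\hookrightarrow\R\to\AC_\Z^\loc(\R)$, the second map being $r\mapsto\tau_r$, exhibits $n\mapsto\tau_n$ as a quasi-isometry of Polish groups, so $\AC_\Z^\loc(\R)$ is quasi-isometric to $\Z$. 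I do not expect a real obstacle: all the substantial work sits in the earlier results, and the only point that deserves care is the already-noted one that membership in $K_*$ forces each integer to be fixed, which is precisely what makes $K_*(S)\subset S$ hold for $S=[-1,1]$.
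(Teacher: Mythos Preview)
Your proposal is correct and follows the same route as the paper: apply Theorem \ref{thm: QI of group of bijections} with $H=\R$, $G=\AC_\Z^\loc(\R)$, $S=[-1,1]$, then compose with the quasi-isometry $\Z\hookrightarrow\R$. The only cosmetic differences are that you spell out why the hypotheses of Corollary \ref{coro: conditions on Polish group of homeomorphisms} follow from $\Omega$ being a homeomorphism, you verify $K_*(S)\subset S$ directly from $k(\pm 1)=\pm 1$ and monotonicity rather than citing the Intermediate Value Theorem as in Example \ref{ex: Homeo_Z(R)}, and you make the final step $\Z\to\R$ a bit more explicit; the paper compresses all of this into a few sentences.
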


\begin{proof}
Apply Theorem \ref{thm: QI of group of bijections} with $H=\mathbb \R$, $S=[-1,1]$, and $G=\AC_\Z^\loc(\R)$.  By Proposition \ref{prop: line group topology} the assumptions of Corollary \ref{coro: conditions on Polish group of homeomorphisms} are satisfied.  As $S$ is compact it is a coarsely bounded subset of $H$.  The isotropy subgroup $K_G$ is isomorphic to $\AC_+(I)$ and so is a coarsely bounded group.  The condition $K_G(S)\subset S$ and $K_G(S^{-1})\subset S^{-1}$ is satisfied by the Intermediate Value Theorem just the same as in Example \ref{ex: Homeo_Z(R)}.  By Theorem \ref{thm: QI of group of bijections} $H\to G, h\mapsto\lambda_h$ is a quasi-isometry of Polish groups.  Now $\Z\to\AC_\Z^\loc(\R),n\mapsto\tau_n$ is a quasi-isometry of Polish groups because it is the composition of inclusion $\Z\to H$ and $H\to G,h\mapsto\lambda_h$.
\end{proof}

Theorem \ref{thm: Main Theorem} then just collects the statements of Propositions \ref{prop: AC is coarsely bounded}, \ref{prop: AC_+(S) is coarsely bounded}, and \ref{prop: AC_+^loc(Z) qi type}.  In the general theory of \cite{RoseCoarse}, if $G$ is a Polish group which is generated by a coarsely bounded subset then there is always a metric on $G$ which is simultaneously compatible with the topology, right-invariant, and realizes the quasi-isometry type of $G$.  In closing we complete the proof that $d_\infty+d_*$ is such a metric on $\AC_\Z^\loc(\R)$.

\begin{prop}
The metric $d_\infty+d_*$ on $\AC_\Z^\loc(\R)$ is a representative of the quasi-isometry type of $\AC_\Z^\loc(\R)$.
\end{prop}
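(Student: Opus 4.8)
The plan is to exhibit an explicit quasi-isometry between $(\AC_\Z^\loc(\R), d_\infty + d_*)$ and a metric representing the quasi-isometry type already identified, namely $\Z$ with its word metric (equivalently, the coarsely bounded and closed generating set $S = [-1,1]$ viewed inside $H = \R$, whose associated word metric on $G$ via Theorem~\ref{thm: QI of group of bijections} realizes the quasi-isometry type of $G$). Concretely, let $\mathscr S = S \cup \{1\} \cup S^{-1}$ as in the proof of Theorem~\ref{thm: QI to factor subgroup}, so that $\rho_{\overline{\mathscr S}K_G}$ on $G$ represents the quasi-isometry type of $G$ and by Proposition~\ref{prop: word metric} restricts to $\rho_{\overline{\mathscr S}}$ on $\Lambda_\R \cong \R$, which in turn is quasi-isometric to $\Z$. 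It then suffices to prove that the identity map $(\AC_\Z^\loc(\R), d_\infty + d_*) \to (\AC_\Z^\loc(\R), \rho_{\overline{\mathscr S}K_G})$ is a quasi-isometry.

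First I would establish the two inequalities. For the bound $\rho_{\overline{\mathscr S}K_G} \lesssim d_\infty + d_*$: given $f \in G$ write $f = \tau_{f(0)} \circ k$ with $k = \tau_{f(0)}^{-1}\circ f \in K_G$; since $K_G$ is coarsely bounded, it lies inside a single $\overline{\mathscr S}K_G$-ball, so $\rho_{\overline{\mathscr S}K_G}(f,1) \leq \rho_{\overline{\mathscr S}}(\tau_{f(0)},1) + O(1) \lesssim |f(0)| + 1 \leq d_\infty(f,\mathrm{id}) + 1$, because translation by $r$ is a product of at most $\lceil |r| \rceil$ generators from $\overline{\mathscr S}$. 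For the reverse bound $d_\infty + d_* \lesssim \rho_{\overline{\mathscr S}K_G}$: one checks that each generator from $\overline{\mathscr S}$ (translation by an element of $[-1,1]$) and each element of $K_G$ moves points a bounded $d_\infty$-amount and has bounded $d_*$-distance to the identity — the latter uses that $\overline{\mathscr S}$ is $d_*$-bounded (trivially, as $d_*$ is translation-invariant under $\mathcal T$, so $d_*(\tau_r,\mathrm{id}) = 0$) and $K_G$ is $d_*$-bounded since it is coarsely bounded and $d_*$ is a compatible left-invariant metric there. Then right-invariance of both $d_\infty$ and $d_*$ (Lemma~\ref{lemma: inclusion is continuous}, Proposition~\ref{prop: d_*}), together with the triangle inequality, gives $(d_\infty + d_*)(f,\mathrm{id}) \leq C \cdot \rho_{\overline{\mathscr S}K_G}(f,1)$ by writing $f$ as a word of that length.

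The one point needing care, and the main obstacle, is the $d_\infty$-direction of the reverse inequality: a single translation generator can move points by only $\leq 1$, but a composition of $n$ such generators could a priori shift a point by up to $n$ — however this is exactly the linear growth we want, so the issue is instead the $K_G$ factors interspersed in the word. Here one uses $K_G \subset \Homeo_\Z(\R)$: every $k \in K_G$ satisfies $|k(x) - x| \leq 1$ for all $x$ (since $k$ fixes $0$ and commutes with integer translations, so $k$ maps $[n,n+1]$ onto itself), hence contributes at most $1$ to the $d_\infty$-displacement uniformly. Combining, each of the $\rho_{\overline{\mathscr S}K_G}(f,1)$ factors in a geodesic word for $f$ displaces any given point by a uniformly bounded amount, so $d_\infty(f,\mathrm{id}) \lesssim \rho_{\overline{\mathscr S}K_G}(f,1)$; the $d_*$ estimate is the easier half since $d_*$ is already bounded on both $\overline{\mathscr S}$ and $K_G$ and is right-invariant. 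This yields the two-sided linear comparison, so the identity is a quasi-isometry and $d_\infty + d_*$ represents the quasi-isometry type of $\AC_\Z^\loc(\R)$.
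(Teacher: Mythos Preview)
Your argument is correct, but it takes a different and considerably longer route than the paper's.  The paper does not compare $d_\infty+d_*$ against the word metric $\rho_{\overline{\mathscr S}K_G}$ at all.  Instead it observes directly that $r\mapsto\tau_r$ is an \emph{isometric} embedding of $(\R,|\cdot|)$ into $(\AC_\Z^\loc(\R),\,d_\infty+d_*)$, since $d_\infty(\tau_r,\tau_s)+d_*(\tau_r,\tau_s)=|r-s|$, and that this embedding is coarsely onto because $\AC_\Z^\loc(\R)=\mathcal T\circ K_*$ with $K_*$ bounded in $d_\infty+d_*$.  Since the quasi-isometry type of $\AC_\Z^\loc(\R)$ was already identified with that of $\R$ in Proposition~\ref{prop: AC_+^loc(Z) qi type}, this finishes the proof in two lines.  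What your approach buys is an explicit two-sided linear comparison between $d_\infty+d_*$ and the canonical word metric; what the paper's approach buys is brevity, by factoring through the already-established quasi-isometry with $\R$ rather than re-proving it at the level of metrics.

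One small slip: you write that $d_*$ is a ``compatible left-invariant metric'' on $K_G$, but $d_*$ is right-invariant there (Proposition~\ref{prop: d_*}).  This does not affect your conclusion, since a coarsely bounded group is bounded in every compatible right-invariant metric as well (pull back via inversion to a left-invariant one), but the statement as written is inaccurate.
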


\begin{proof}
For all $r,s\in\R$
\[d_\infty(\tau_r,\tau_s)+d_*(\tau_r,\tau_s)=|r-s|\]
so $r\mapsto\tau_r$ defines an isometric embedding of $\R$ with its standard metric into $\AC_\Z^\loc(\R)$ with the metric $d_\infty+d_*$.  As $\AC_\Z^\loc(\R)=\mathcal T\circ K_*$ and $K_*$ is bounded in $d_\infty+d_*$ it follows that the isometric embedding of $\R$ into $\AC_\Z^\loc\left(\R\right)$ is coarsely onto, and so the metric $d_\infty+d_*$ on $\AC_\Z^\loc(\R)$ represents the quasi-isometry of type of $\AC_\Z^\loc(\R)$. 
\end{proof}

\bibliographystyle{plain}
\bibliography{references}

\begin{thebibliography}{1}

\bibitem{Cohen}
Michael~P. Cohen.
\newblock On the large-scale geometry of diffeomorphism groups of
  $1$-manifolds.

\bibitem{MannRose}
Kathryn Mann and Christian Rosendal.
\newblock Large scale geometry of homeomorphism groups.
\newblock 2016.

\bibitem{RoelckeDierolf}
W.~Roelcke and S.~Dierolf.
\newblock {\em Uniform Structures on Topological Groups and Their Quotients}.
\newblock McGraw-Hill International Book Co., 1981.

\bibitem{RoseCoarse}
Christian Rosendal.
\newblock {\em Coarse Geometry of Topological Groups}.
\newblock 2017.

\bibitem{Rudin}
Walter Rudin.
\newblock {\em Real \& Complex Analysis}.
\newblock MHHE, 1987.

\bibitem{Solecki}
Slawomir Solecki.
\newblock {\em Polish Group Topologies, in Sets and Proofs (London Mathematical
  Society Lecture Note Series 258)}.
\newblock Cambridge University Press, 1999.
\newblock pp 339--364.

\bibitem{Szep}
J.~Sz\'ep.
\newblock On the structure of groups which can be represented as the product of
  two subgroups.
\newblock {\em Acta Sci. Math. Szeged}, 1950.

\bibitem{Zappa}
G.~Zappa.
\newblock Sulla costruzione dei gruppi prodotto di due dati sottogruppi
  permutabili traloro.
\newblock {\em Atti Secondo Congresso Un. Mat. Ital., Bologna; Edizioni
  Cremonense, Rome}, 1942.

\end{thebibliography}

\end{document}